\def\@tocline#1#2#3#4#5#6#7{\relax
  \ifnum #1>\c@tocdepth 
  \else
    \par \addpenalty\@secpenalty\addvspace{#2}%
    \begingroup \hyphenpenalty\@M
    \@ifempty{#4}{%
      \@tempdima\csname r@tocindent\number#1\endcsname\relax
    }{%
      \@tempdima#4\relax
    }%
    \parindent\z@ \leftskip#3\relax \advance\leftskip\@tempdima\relax
    \rightskip\@pnumwidth plus4em \parfillskip-\@pnumwidth
    #5\leavevmode\hskip-\@tempdima
      \ifcase #1
       \or\or \hskip 1em \or \hskip 2em \else \hskip 3em \fi%
      #6\nobreak\relax
      \dotfill
      \hbox to\@pnumwidth{\@tocpagenum{#7}}
    \par
    \nobreak
    \endgroup
  \fi}
\newcommand{\N}{\mathbb N} 
\newcommand{\R}{\mathbb R} 
\newcommand{\Ec}{\mathcal E}
\newcommand{\Lc}{\mathcal L}
\newcommand{\vareps}{\varepsilon}
\newcommand{\eps}{\epsilon}
\newcommand{\dx}{{\rm d} x}
\def\({\left(}
\def\){\right)}
\def\<{\left\langle}
\def\>{\right\rangle}
\DeclareMathOperator*{\nablax}{{\nabla_{x^\perp}}}
\DeclareMathOperator*{\ima}{Im}
\DeclareMathOperator*{\rea}{Re}
\DeclareMathOperator*{\Tr}{Tr}
\numberwithin{equation}{section}
\newtheorem{theorem}{Theorem}[section]
\newtheorem{lemma}[theorem]{Lemma}
\newtheorem{corollary}[theorem]{Corollary}
\newtheoremstyle{remarkstyle}
{}{}{
}{ }{\bfseries}{.}{ }{\thmname{#1}\thmnumber{ #2}\thmnote{ (#3)}}
\theoremstyle{remarkstyle}
\newtheorem{remark}{Remark}[section]
\title[Blow-up 2D RBEC]{Blow-up of 2D attractive Bose--Einstein condensates \\ at the critical rotational speed} 
\author[V. D. Dinh, D.-T. Nguyen, and N. Rougerie]{Van Duong Dinh, Dinh-Thi Nguyen, and Nicolas Rougerie}
\address[V. D. Dinh]{Ecole Normale Sup\'erieure de Lyon \& CNRS, UMPA (UMR 5669), Lyon, France}
\email{contact@duondinh.com}
\address[D.-T. Nguyen]{Ecole Normale Sup\'erieure de Lyon \& CNRS, UMPA (UMR 5669), Lyon, France}
\email{dinh.nguyen@ens-lyon.fr}
\address[N. Rougerie]{Ecole Normale Sup\'erieure de Lyon \& CNRS, UMPA (UMR 5669), Lyon, France}
\email{nicolas.rougerie@ens-lyon.fr}
\date{August 2022}
\begin{document}

\begin{abstract}
We study the ground states of a 2D focusing non-linear Schr\"odinger equation with rotation and harmonic trapping. When the strength of the interaction approaches a critical value from below, the system collapses to a profile obtained from the optimizer of a Gagliardo--Nirenberg interpolation inequality. This was established before in the case of fixed rotation frequency. We extend the result to rotation frequencies approaching, or even equal to, the critical frequency at which the centrifugal force compensates the trap. We prove that the blow-up scenario is to leading order unaffected by such a strong deconfinement mechanism. In particular the blow-up profile remains independent of the rotation frequency.  
\end{abstract}

	\maketitle

	\tableofcontents
		
	\section{Introduction}
	\label{S1}
	\setcounter{equation}{0}
	
	Bose--Einstein condensates~\cite{CorWie-nobel,Ketterle-nobel} form a remarkable phase of matter where quantum effects can be spectacularly observed on a mesoscopic scale. Indeed, a single quantum wave-function being macroscopically occupied, its' quantum coherence becomes accessible e.g. to imaging techniques. The flexibility of modern experiments with dilute atomic gases are also remarkable~\cite{Aftalion-07,BloDalZwe-08,DalGerJuzOhb-11,Cooper-08,PitStr-03,PetSmi-01}, allowing to access reduced dimensionalities (2D or even 1D), to tune the interactions (allowing for repulsion or attraction between particles) and to mimic external magnetic fields either by rotation or by coupling internal degrees of freedom to optical fields. 
	
	In this note we consider such a combination of effects. Namely we are interested in 2D attractive BECs, where the contact interactions will destabilize the gas towards collapse if they are too strong. The resulting collapse of ground states~\cite{GuoSei-13} turns out to be unaffected by the addition of a moderate rotation of the gas~\cite{LewNamRou-17-proc} (see also~\cite{EycRou-18} for dipolar gases). A fast rotation may however destabilize the gas towards expansion, because the centrifugal force fights the confining potential. These two effects might compete, but we prove that the instability towards collapse always dominates, leading to a blow-up scenario independent of the rotation frequency. This answers a question raised in~\cite[Remark~2.2]{LewNamRou-17-proc}.  
	
	We shall consider the minimization problem
	\begin{align} \label{ener-mini}
	E^{\rm NLS}_{\Omega,a}:= \inf \left\{ \Ec^{\rm NLS}_{\Omega,a}(\phi) : \phi \in X(\R^2) :\|\phi\|_{L^2} =1\right\},
	\end{align}
	where $\Ec^{\rm NLS}_{\Omega, a}$ is the nonlinear Schr\"odinger (NLS) energy functional with attractive interactions
	\begin{align*}
	\Ec^{\rm NLS}_{\Omega, a}(\phi) &= \int_{\R^2} |\nabla \phi(x)|^2 \dx + \int_{\R^2} |x|^2 |\phi(x)|^2 \dx +2 \Omega \<L\phi, \phi\> -\frac{a}{2} \int_{\R^2} |\phi(x)|^4 \dx \\
	&= \int_{\R^2} |(-i\nabla + \Omega x^\perp) \phi(x))|^2 \dx + (1-\Omega^2) \int_{\R^2} |x|^2 |\phi(x|^2 \dx -\frac{a}{2} \int_{\R^2} |\phi(x)|^4 \dx.
	\end{align*}
	Here $a>0$ describes the strength of interactions, $\Omega \geq 0$ is the rotation frequency and $L=-ix\wedge \nabla = i(x_2 \partial_1-x_1 \partial_2)$ the angular momentum operator. The space $X(\R^2)$ in \eqref{ener-mini} is a functional space in which the energy functional $\Ec^{\rm NLS}_{\Omega, a}$ is well-defined, see below.
	
	In the case of high rotational speed $\Omega>1$, it was proved in \cite{BaoWanMar-05} that there are no ground states for $E^{\rm NLS}_{\Omega, a}$ for all $a>0$. Indeed, when the rotational speed is larger than the trapping frequency, the centrifugal force caused by the rotation is stronger than the centripetal force created by the harmonic trap and the gas flies apart. On the other hand, the condensate remains stable when $\Omega<1$. In this case, one can prove the norm equivalence
	\begin{equation}\label{norm-equavalent}
	\|\nabla \phi\|^2_{L^2} + \|x\phi\|^2_{L^2} + 2 \Omega \<L\phi, \phi\> \simeq \|\nabla \phi\|^2_{L^2} + \|x\phi\|^2_{L^2}.
	\end{equation}
	It is then clear that the energy functional is well-defined on the weighted Sobolev space
	\[
	\Sigma(\R^2):= H^1(\R^2) \cap L^2(\R^2, |x|^2 \dx),
	\]
	and hence one can take $X(\R^2) \equiv \Sigma(\R^2)$. Using the compact embedding $\Sigma(\R^2) \subset L^r(\R^2)$ for all $r \in [2,\infty)$, one can easily show the existence of a ground state for $E^{\rm NLS}_{\Omega,a}$ with $0<a<a_*$ (see e.g., \cite{GuoSei-13} in the case $\Omega = 0$). Here $a_*=\|Q\|_{L^2}^2$ with $Q$ the unique (up to translations) positive solution of the elliptic equation
	\begin{align} \label{Q}
	-\Delta Q + Q - Q^3=0 \quad \text{in} \quad \R^2.
	\end{align}
	The constant $a_*$ also appears in the sharp Gagliardo--Nirenberg inequality
	\begin{align} \label{GN-ineq}
	\frac{a_*}{2} \int_{\R^2}|\phi(x)|^4 \dx \leq  \left(\int_{\R^2} |\nabla \phi(x)|^2 \dx\right) \left( \int_{\R^2} |\phi(x)|^2 \dx\right), \quad \forall \phi \in H^1(\R^2).
	\end{align}
	The case of critical rotational speed $\Omega=1$ is special. The situation becomes more subtle since the centrifugal force caused by the rotation is exactly compensated by the harmonic trap. In particular, the norm equivalence \eqref{norm-equavalent} is no longer available. Thus working on $\Sigma(\R^2)$ does not help to find ground states for $E^{\rm NLS}_{1,a}$. In this case, we study the minimization \eqref{ener-mini} on a larger functional space of magnetic Sobolev functions, namely
	\[
	H^1_{x^\perp}(\R^2):= \left\{\phi \in L^2(\R^2) : (-i\nabla +x^\perp) \phi \in L^2(\R^2)\right\},
	\]
	hence we set $X(\R^2)= H^1_{x^\perp}(\R^2)$ when $\Omega = 1$. By making use of the concentration-compactness argument adapted to magnetic Sobolev spaces (see e.g., \cite{EstLio-89}), it was proved in \cite{Dinh-22,GuoLuoPen-21} that $E^{\rm NLS}_{1,a}$ has at least one ground state provided that $0<a<a_*$. By \eqref{GN-ineq} and the diamagnetic inequality (see e.g., \cite[Theorem 7.21]{LieLos-01})
	\begin{equation} \label{dia-ineq}
	|\nabla |\phi|(x)| \leq |(-i\nabla + x^\perp) \phi(x)|, \quad \mbox{a.e } x\in \R^2, \quad \forall \phi \in H^1_{x^\perp}(\R^2)
	\end{equation}
	we also have the following magnetic Gagliardo--Nirenberg inequality 
	\begin{align} \label{mag-GN-ineq}
	\frac{a_*}{2} \int_{\R^2}|\phi(x)|^4 \dx \leq  \left(\int_{\R^2} |(-i\nabla + x^\perp) \phi(x)|^2 \dx\right) \left( \int_{\R^2} |\phi(x)|^2 \dx\right), \quad \forall \phi \in H^1_{x^\perp}(\R^2).
	\end{align}
	The main difference between \eqref{GN-ineq} and \eqref{mag-GN-ineq} is that there is no optimizer for \eqref{mag-GN-ineq} while $Q$ in \eqref{Q} is the unique (up to translations and dilations) optimizer for \eqref{GN-ineq}.
	
	\subsection{Collapse in NLS theory}
	
	In the sequel we are interested in the blow-up behavior of ground states for $E^{\rm NLS}_{\Omega,a}$ when $a$ approaches $a_*$. Our first result concerns the blow-up limit with the critical rotation speed $\Omega=1$. 
	
	\begin{theorem}[\textbf{Collapse of NLS ground states at critical rotational speed}]\label{theo-blow-nls}\mbox{}\\
		We have, as $a\nearrow a_*$,
		\begin{equation}\label{nls-energy}
		E^{\rm NLS}_{1,a} = (a_*-a)^{1/2}\left(2\frac{\|xQ_0\|_{L^2}}{a_*^{1/2}} + o(1)\right)
		\end{equation}
		where $Q_0=\|Q\|_{L^2}^{-1}Q$. In addition, for any sequence $\{a_n\}_{n}$ satisfying $a_n \nearrow a_*$ and any sequence of ground state $\phi_n$ for $E^{\rm NLS}_{1,a_n}$, there exist a sequence $\{\theta_n\}_n \subset [0,2\pi)$ and a sequence $\{x_n\}_n \subset \R^2$ such that the following convergence holds strongly in $H^1 \cap L^\infty(\R^2)$:
		\begin{equation}\label{nls-gs-critical}
		\lim_{n\to\infty}\frac{(a_*-a_n)^{1/4}}{a_*^{1/4}\|xQ_0\|_{L^2}^{1/2}} \phi_n \(\frac{(a_*-a_n)^{1/4}}{a_*^{1/4}\|xQ_0\|_{L^2}^{1/2}}x + x_n\) \exp\left(i \frac{(a_*-a_n)^{1/4}}{a_*^{1/4}\|xQ_0\|_{L^2}^{1/2}} x_n^\perp \cdot x + i\theta_n\right) = Q_0(x).
		\end{equation}
	\end{theorem}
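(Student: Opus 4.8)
The plan is to prove \eqref{nls-energy} by matching an upper and a lower bound on $E^{\rm NLS}_{1,a}$, and to read off the profile \eqref{nls-gs-critical} from the near-saturation of the lower bound. Throughout write $\mathcal{G}(\phi):=\|(-i\nabla+x^\perp)\phi\|_{L^2}^2$ and $\mathcal{F}(\phi):=\|\phi\|_{L^4}^4$, so $\Ec^{\rm NLS}_{1,a}(\phi)=\mathcal{G}(\phi)-\tfrac a2\mathcal{F}(\phi)$.

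\textbf{Upper bound.} First I would test \eqref{ener-mini} with the real, centred profile $\phi_\tau(x):=\tau Q_0(\tau x)$, $\tau>0$, of unit mass. Since $\phi_\tau$ is real-valued the angular momentum vanishes, $\langle L\phi_\tau,\phi_\tau\rangle=0$, hence $\mathcal{G}(\phi_\tau)=\|\nabla\phi_\tau\|_{L^2}^2+\|x\phi_\tau\|_{L^2}^2=\tau^2\|\nabla Q_0\|_{L^2}^2+\tau^{-2}\|xQ_0\|_{L^2}^2$. Using the Pohozaev relations $\|\nabla Q_0\|_{L^2}^2=1$ and $\|Q_0\|_{L^4}^4=2/a_*$ that follow from \eqref{Q}, one gets $\Ec^{\rm NLS}_{1,a}(\phi_\tau)=\tau^2\tfrac{a_*-a}{a_*}+\tau^{-2}\|xQ_0\|_{L^2}^2$, whose minimum over $\tau$ is $(a_*-a)^{1/2}\,2\|xQ_0\|_{L^2}/a_*^{1/2}$, attained at $\tau^{-2}=\beta_a^2$ with $\beta_a=(a_*-a)^{1/4}/(a_*^{1/4}\|xQ_0\|_{L^2}^{1/2})$. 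This already fixes the conjectural length scale in \eqref{nls-gs-critical}.

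\textbf{A priori bounds and diamagnetic splitting.} For a ground state $\phi_a$, the magnetic inequality \eqref{mag-GN-ineq} together with the upper bound gives $\mathcal{G}(\phi_a)=O((a_*-a)^{-1/2})$. The decisive identity, obtained by writing $\phi_a=|\phi_a|e^{iS_a}$, is
\[
\mathcal{G}(\phi_a)=\|\nabla|\phi_a|\|_{L^2}^2+\int_{\R^2}|\phi_a|^2\,|\nabla S_a+x^\perp|^2\dx,
\]
so that $E^{\rm NLS}_{1,a}=A_a+P_a$ with $A_a:=\|\nabla|\phi_a|\|_{L^2}^2-\tfrac a2\||\phi_a|\|_{L^4}^4$ and $P_a:=\int|\phi_a|^2|\nabla S_a+x^\perp|^2$. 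By \eqref{GN-ineq} we have $A_a\ge\tfrac{a_*-a}{a_*}\|\nabla|\phi_a|\|_{L^2}^2\ge0$, and $P_a\ge0$; since $E^{\rm NLS}_{1,a}\to0$, both $A_a\to0$ and $P_a\to0$. The term $P_a$ is exactly the confinement the gauge $A=x^\perp$ conceals: because $\curl x^\perp=2\ne0$ the field $x^\perp$ is not a gradient, so $P_a$ cannot vanish unless $|\phi_a|^2$ concentrates. I would use this to prove blow-up, $\|\nabla|\phi_a|\|_{L^2}\to\infty$: if this stayed bounded, then after a magnetic translation removing the translation invariance of $\Ec^{\rm NLS}_{1,a}$ a weak limit would optimize \eqref{mag-GN-ineq}, which has no optimizer -- a contradiction.

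\textbf{Compactness, lower bound, scale fixing.} With blow-up, set $\sigma_a:=\|\nabla|\phi_a|\|_{L^2}^{-1}\to0$, choose a concentration point $x_a$ of $|\phi_a|$, and rescale $\tilde w_a(x):=\sigma_a|\phi_a|(\sigma_a x+x_a)$, a nonnegative function normalized so that $\|\tilde w_a\|_{L^2}=\|\nabla\tilde w_a\|_{L^2}=1$. From $A_a\to0$ one checks that the scale-invariant Gagliardo--Nirenberg deficit of $\tilde w_a$ tends to zero, so $\tilde w_a$ minimizes \eqref{GN-ineq}; concentration-compactness and the uniqueness up to symmetries of the optimizer force $\tilde w_a\to Q_0$ strongly in $H^1(\R^2)$. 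Then $A_a\ge\tfrac{a_*-a}{a_*}\sigma_a^{-2}$, while minimising $P_a\ge\min_S\int|\phi_a|^2|\nabla S+x^\perp|^2$ over the phase and centring reduces, after rescaling, to $\sigma_a^2\min_T\int Q_0^2|z^\perp-\nabla T|^2\dx$; since $Q_0$ is radial, $z^\perp$ is $L^2(Q_0^2)$-orthogonal to all gradients, the minimiser is $T\equiv0$, and $P_a\ge\sigma_a^2\|xQ_0\|_{L^2}^2(1+o(1))$. Hence $E^{\rm NLS}_{1,a}=A_a+P_a\ge\tfrac{a_*-a}{a_*}\sigma_a^{-2}+\sigma_a^2\|xQ_0\|_{L^2}^2(1+o(1))\ge2\|xQ_0\|_{L^2}(\tfrac{a_*-a}{a_*})^{1/2}(1+o(1))$ by AM--GM, matching the upper bound and proving \eqref{nls-energy}; equality in AM--GM pins $\sigma_a/\beta_a\to1$.

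\textbf{Phase rigidity and conclusion.} Squeezing between the two bounds forces $P_a=\min_S(\cdots)(1+o(1))$, i.e. the phase $S_a$ is asymptotically the magnetic-translation phase $-x_a^\perp\cdot x$ (up to a constant $\theta_a$); removing it upgrades the modulus convergence to $\beta_a\phi_a(\beta_a x+x_a)\exp(i\beta_a x_a^\perp\cdot x+i\theta_a)\to Q_0$ in $H^1(\R^2)$, which is \eqref{nls-gs-critical}, and the $L^\infty$ convergence follows from uniform elliptic estimates on the rescaled Euler--Lagrange equation. The main obstacle, absent in the non-rotating case, is twofold: the functional is invariant under the magnetic translations $T_v\phi=e^{-iv^\perp\cdot x}\phi(\cdot-v)$, so compactness must be regained modulo this noncompact symmetry by tracking $x_a$ and its induced linear phase; and one must show that the angular-momentum contribution, rather than lowering the energy, is precisely the hidden harmonic confinement $\int|\phi_a|^2|\nabla S_a+x^\perp|^2$ whose optimal value regenerates the factor $\|xQ_0\|_{L^2}$.
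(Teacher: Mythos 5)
Your outer skeleton (the trial state $\tau Q_0(\tau\cdot)$ for the upper bound, blow-up by contradiction with the non-existence of optimizers at $a=a_*$, compactness of the rescaled modulus to $Q_0$, and the AM--GM matching that fixes the scale $\beta_a$) coincides with the paper's Steps 1, 6 and 8. Where you genuinely depart is the treatment of the rotation term: you hide it inside the Madelung current term $P_a=\int_{\R^2}|\phi_a|^2|\nabla S_a+x^\perp|^2\dx\geq 0$ and then try to bound $P_a$ below by a minimization over phases, whereas the paper writes $\varphi_n=q_n+ir_n$, uses the non-degeneracy of $Q$ (the spectral gap of $\Lc=-\Delta-Q^2+1$ together with the orthogonality $\int_{\R^2} Q_0 r_n\dx=0$) to get $\|r_n\|_{H^1}=O(\vareps_n^2)$, and deduces $|\<L\varphi_n,\varphi_n\>|=o(\vareps_n^2)$. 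Your route is attractive because positivity of $P_a$ is automatic, but as written it has a genuine gap at its central step: the inequality $P_a\geq\min_S\int_{\R^2}|\phi_a|^2|\nabla S+x^\perp|^2\dx$, with the minimum taken over globally defined phases $S$, presupposes that $\phi_a$ has no zeros. A ground state of a rotating gas may a priori carry vortices; then the current field $J(\phi_a)/|\phi_a|^2$ is only locally a gradient and carries quantized circulation around the zeros, and such circulation can partially screen $x^\perp$ (note $\curl x^\perp=2\neq 0$, so $-x^\perp$ is invisible to a minimization over gradients; screening it by vortices is exactly the nucleation mechanism one must rule out). So you are comparing $P_a$ with an infimum over a class that need not contain the actual current, and the claimed lower bound does not follow. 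The paper's real/imaginary-part decomposition is immune to this issue because $q_n$ and $r_n$ are always globally defined.

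Even granting a globally defined phase, your passage from $\min_T\int_{\R^2}\tilde w_a^2|z^\perp-\nabla T|^2\,{\rm d}z$ to $\min_T\int_{\R^2}Q_0^2|z^\perp-\nabla T|^2\,{\rm d}z=\|xQ_0\|_{L^2}^2$ replaces the weight $\tilde w_a^2$ by $Q_0^2$ inside an infimum over all $T$. The radial orthogonality $\int_{\R^2} Q_0^2\, z^\perp\cdot\nabla T\,{\rm d}z=0$ holds only for the exactly radial weight; for $\tilde w_a^2$ the cross term is nonzero, the optimal phase $T_a$ is nontrivial, and controlling its contribution requires locally uniform convergence of $\tilde w_a$ to $Q_0$ and control of the tails and second moments (the weight degenerates at infinity, so $H^1$ convergence of the modulus alone gives no control of $\int\tilde w_a^2|\nabla T_a|^2$ nor of $\int|z|^2\tilde w_a^2$). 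This is precisely what the paper's Steps 2--5 supply (Lagrange multiplier asymptotics $\vareps_n^2\mu_n\to -1$, De Giorgi--Nash--Moser local boundedness of $W_n=|\varphi_n|^2$, uniform exponential decay), and your proposal omits them entirely; the same machinery is also what would be needed to make your final ``phase rigidity'' upgrade to full $H^1$ convergence, and the $L^\infty$ convergence via $H^2$ bounds, rigorous. In short: the decomposition $E^{\rm NLS}_{1,a}=A_a+P_a$ is a legitimate and genuinely different starting point, but the two load-bearing claims about $P_a$ (gradient structure of the current, and stability of the weighted minimization) are unproven, and the second one cannot be fixed without the elliptic regularity apparatus you skipped.
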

	
	As an application of this result, we have the following blow-up behavior of ground states when $\Omega \nearrow 1$ and $a \nearrow a^*$ at the same time.
	\begin{corollary}[\textbf{Collapse at subcritical rotational speed}] \label{coro-blow-nls}\mbox{}\\
		For any sequence $\{\Omega_n\}_n, \{a_n\}_n$ satisfying $\Omega_n \nearrow 1$ and $a_n \nearrow a_*$, and any ground state $\phi_n$ for $E^{\rm NLS}_{\Omega_n,a_n}$, there exists a sequence $\{\theta_n\}_n\subset [0,2\pi)$ such that the following convergence holds strongly in $H^1 \cap L^\infty(\R^2)$:
		\begin{equation}\label{nls-gs}
		\lim_{n\to\infty}\frac{(a_*-a_n)^{1/4}}{a_*^{1/4}\|xQ_0\|_{L^2}^{1/2}} \phi_n \(\frac{(a_*-a_n)^{1/4}}{a_*^{1/4}\|xQ_0\|_{L^2}^{1/2}}x\) e^{i\theta_n} = Q_0(x).
		\end{equation}
	\end{corollary}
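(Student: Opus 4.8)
The plan is to reduce the problem to the already-settled critical case of Theorem~\ref{theo-blow-nls}; the point is that for $\Omega_n<1$ two exact symmetries of the functional pin down the translational and rotational degrees of freedom that the weak trap $(1-\Omega_n^2)|x|^2$ can no longer control by itself. I would first record the matching energy asymptotics. The upper bound is immediate: inserting the centred real trial state $\phi(x)=\beta Q_0(\beta x)$ into $\Ec^{\rm NLS}_{\Omega_n,a_n}$, the cross term in $\int_{\R^2}|(-i\nabla+\Omega_n x^\perp)\phi|^2\dx$ vanishes and the $\Omega_n$-dependence cancels against $(1-\Omega_n^2)\int_{\R^2}|x|^2|\phi|^2\dx$, so one recovers exactly the critical computation and, optimising in $\beta$, $E^{\rm NLS}_{\Omega_n,a_n}\le (a_*-a_n)^{1/2}\big(2\|xQ_0\|_{L^2}/a_*^{1/2}\big)$. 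For the lower bound I would drop the nonnegative trap and use that the trapless rotating problem rescales onto the critical one: with $\phi(x)=\Omega^{1/2}\psi(\Omega^{1/2}x)$ one checks $\int_{\R^2}|(-i\nabla+\Omega x^\perp)\phi|^2\dx=\Omega\int_{\R^2}|(-i\nabla+x^\perp)\psi|^2\dx$ and $\|\phi\|_{L^4}^4=\Omega\|\psi\|_{L^4}^4$, whence
\[
E^{\rm NLS}_{\Omega_n,a_n}\ \ge\ \inf_{\|\phi\|_{L^2}=1}\Big(\int_{\R^2}|(-i\nabla+\Omega_n x^\perp)\phi|^2\dx-\tfrac{a_n}{2}\int_{\R^2}|\phi|^4\dx\Big)\ =\ \Omega_n\,E^{\rm NLS}_{1,a_n},
\]
the last equality following from the rescaling above together with the fact that the $\Omega=1$ functional carries no trap, so its infimum is $E^{\rm NLS}_{1,a_n}$. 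Since $\Omega_n\nearrow1$, Theorem~\ref{theo-blow-nls} and the upper bound squeeze $E^{\rm NLS}_{\Omega_n,a_n}=(a_*-a_n)^{1/2}\big(2\|xQ_0\|_{L^2}/a_*^{1/2}+o(1)\big)$; the leading asymptotics are unchanged.

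The two symmetries come next. For \emph{centring}, consider the magnetic translation $\tau_y\phi(x):=e^{-i\Omega_n y^\perp\cdot x}\phi(x-y)$, which satisfies $\int_{\R^2}|(-i\nabla+\Omega_n x^\perp)\tau_y\phi|^2\dx=\int_{\R^2}|(-i\nabla+\Omega_n x^\perp)\phi|^2\dx$ and $|\tau_y\phi|=|\phi(\cdot-y)|$, so only the trap term changes: writing $x_{\phi_n}:=\int_{\R^2}x|\phi_n|^2\dx$, one gets $\Ec^{\rm NLS}_{\Omega_n,a_n}(\tau_y\phi_n)=E^{\rm NLS}_{\Omega_n,a_n}+(1-\Omega_n^2)\big(2y\cdot x_{\phi_n}+|y|^2\big)$. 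Choosing $y=-x_{\phi_n}$ would lower the energy strictly below its infimum unless $x_{\phi_n}=0$; hence every ground state is centred. For \emph{conjugation}, since $\Ec^{\rm NLS}_{\Omega_n,a_n}(\overline{\phi})=\Ec^{\rm NLS}_{\Omega_n,a_n}(\phi)-4\Omega_n\<L\phi,\phi\>$ and $\overline{\phi_n}$ is admissible, minimality forces $\<L\phi_n,\phi_n\>\le0$.

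I would then transfer to the critical problem. From the first form of the energy, $\Ec^{\rm NLS}_{1,a_n}(\phi_n)=E^{\rm NLS}_{\Omega_n,a_n}+2(1-\Omega_n)\<L\phi_n,\phi_n\>$; combining $\<L\phi_n,\phi_n\>\le0$ with the upper bound above yields $\Ec^{\rm NLS}_{1,a_n}(\phi_n)\le E^{\rm NLS}_{1,a_n}(1+o(1))$, while $\phi_n\in\Sigma(\R^2)\subset H^1_{x^\perp}(\R^2)$ is admissible so $\Ec^{\rm NLS}_{1,a_n}(\phi_n)\ge E^{\rm NLS}_{1,a_n}$. Thus $\{\phi_n\}$ is a minimising sequence for $E^{\rm NLS}_{1,a_n}$. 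Rerunning the concentration-compactness analysis behind Theorem~\ref{theo-blow-nls} for this sequence produces the convergence~\eqref{nls-gs-critical} after rescaling by the same $\varepsilon_n=(a_*-a_n)^{1/4}/(a_*^{1/4}\|xQ_0\|_{L^2}^{1/2})$; since $\phi_n$ is centred, the translation can be taken to vanish, and with it the magnetic phase $e^{i\varepsilon_n x_n^\perp\cdot x}$, leaving exactly~\eqref{nls-gs}.

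The main obstacle is conceptual rather than computational: as $\Omega_n\to1$ the coefficient $1-\Omega_n^2\to0$, so the trap can no longer by itself prevent the collapsing profile from drifting to infinity or from spinning up, and any lower bound that discards the rotational kinetic energy (for instance via the diamagnetic inequality) degenerates like $(1-\Omega_n^2)^{1/2}$. The substance of the argument is that these two nearly-free modes are controlled exactly---translation by the magnetic-translation invariance of the kinetic term, rotation by complex conjugation---so the collapse is forced to occur at the origin with the $\Omega$-independent rate. The remaining labour is to check that the proof of Theorem~\ref{theo-blow-nls} applies verbatim to the minimising sequence $\{\phi_n\}$ (only convergence of the energy ratio to the sharp Gagliardo--Nirenberg constant is used) and to upgrade the convergence to $H^1\cap L^\infty(\R^2)$ through the associated Euler--Lagrange equation.
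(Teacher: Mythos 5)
Your preliminary steps are correct and genuinely different from the paper's: the scaling identity giving $E^{\rm NLS}_{\Omega_n,a_n}\ge\Omega_n E^{\rm NLS}_{1,a_n}$ after discarding the residual trap is valid, and so are the two symmetry observations (magnetic translation invariance of the kinetic term forces $\int_{\R^2}x|\phi_n|^2\,\dx=0$ for any ground state, and comparison with $\overline{\phi_n}$ forces $\<L\phi_n,\phi_n\>\le 0$), which together show that $\phi_n$ is an approximate minimizer of the critical problem $E^{\rm NLS}_{1,a_n}$. The gap is in your final step, where you claim the proof of Theorem \ref{theo-blow-nls} applies ``verbatim'' to this sequence because ``only convergence of the energy ratio to the sharp Gagliardo--Nirenberg constant is used.'' That characterization is wrong: only Step 1 of that proof (convergence of the modulus up to translation) is a pure energy argument. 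Steps 2 through 5, 7 and 9 --- the Lagrange multiplier asymptotics $\vareps_n^2\mu_n\to-1$, the sub-equation for $|\varphi_n|^2$, the uniform bound and exponential decay \eqref{expo-deca-Wn}, the key estimate \eqref{est-vareps-n}, and the $H^2$ bound giving $L^\infty$ convergence --- all rest on the Euler--Lagrange equation \eqref{mu-n} of an \emph{exact} minimizer of the critical problem, which your $\phi_n$ does not satisfy. Your $\phi_n$ obeys instead the Euler--Lagrange equation of the $\Omega_n$-problem, with the extra term $(1-\Omega_n^2)|x|^2\phi_n$ and coefficient $\Omega_n$ on the rotation term; since the residual trap enters with a favorable sign, the sub-solution and decay analysis can plausibly be redone along the same lines, but that reworking is precisely the content your proposal skips, and it is not a verbatim application of the theorem.

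A second, related gap is the assertion that ``since $\phi_n$ is centred, the translation can be taken to vanish.'' A vanishing center of mass does not by itself pin the concentration point: a vanishing fraction of mass escaping to distance of order $|x_n|/o(1)$ can balance $\int_{\R^2}x|\phi_n|^2\,\dx=0$ while the bulk concentrates at $x_n\neq 0$. To convert centering into $x_n=o(\vareps_n)$ (which is what kills both the translation and the magnetic phase in \eqref{nls-gs-critical}) you need uniform exponential decay of the rescaled profiles around the concentration point --- again an output of the Euler--Lagrange analysis you have not carried out. For contrast, the paper sidesteps both issues: it writes $\Ec^{\rm NLS}_{\Omega_n,a_n}=\Omega_n\Ec^{\rm NLS}_{1,a_n}+(1-\Omega_n)\Ec^{\rm NLS}_{0,a_n}$, compares $\phi_n$ against a magnetically translated exact critical ground state (for which Theorem \ref{theo-blow-nls} \emph{does} apply, yielding $\<L\varphi_n,\varphi_n\>=o(\vareps_n^2)$), and deduces that $\phi_n$ is a quasi-minimizer of the \emph{non-rotating} problem $E^{\rm NLS}_{0,a_n}$; it then invokes the result of \cite{LewNamRou-17-proc} for approximate ground states of the trapped functional, which is designed to work without an Euler--Lagrange equation and in which the full-strength trap automatically fixes the blow-up point at the origin. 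In short, the paper reduces to $\Omega=0$, where a ready-made quasi-minimizer theorem exists, while you reduce to $\Omega=1$, where the corresponding statement is only available for exact minimizers; bridging that difference is the missing (and substantial) part of your argument.
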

	
	\begin{remark}\mbox{}\\
		\begin{enumerate}
			\item[1.] The convergences of energy and of ground states were proved by Guo and Seiringer \cite{GuoSei-13} when $\Omega=0$. These convergences were extended to the case $0<\Omega<1$ fixed by Lewin, Nam, and the third author \cite{LewNamRou-17-proc} (see also further work in~\cite{GuoZenZho-16,GuoLinWei-17,DenGuoLu-15}). In~\cite{GuoLuoYan-20} it is even proved that a fixed rotation rate has no effect at any order. Theorem \ref{theo-blow-nls} shows that the energy convergence found remains valid in the case of critical rotational speed $\Omega=1$, at least at leading order. This is noteworthy because the trapping potential, which sets the length-scale of the blow-up behavior, is compensated by the centrifugal force.
			\item[2.] The convergence of ground states however has to be stated differently from~\cite{GuoSei-13,LewNamRou-17-proc}. The model is translation-invariant for $\Omega = 1$ and thus ground states converge only  modulo a magnetic translation (namely, a translation decorated by the suitable phase making it commute with the magnetic Laplacian see e.g.~\cite{Perice-22} and references therein). 
			\item[3.] The only effect of the magnetic/rotation field is to set the blow-up length-scale (see the sketch of proof below). This is comparable to the positive particle mass $m>0$ in the Hartree-type and Thomas--Fermi-type models of stars \cite{GuoZen-17,Nguyen-17, Nguyen-19-RMP, Nguyen-19-JMP, Nguyen-19-CVPDE}.
			\item[4.] Our blow-up result, when $\Omega \nearrow 1$ at the same time as $a \nearrow a_*$, answers a question raised in \cite[Remark 2.2]{LewNamRou-17-proc}. In this situation, although the centrifugal force almost compensates the trapping potential, the small residual trapping favors blow-up at the center of the trap. Hence there is no need for a magnetic translation and the ground state convergence is completely similar to the case $0\leq \Omega <1$ fixed. 
		\end{enumerate}
	\end{remark}
	
	Let us briefly describe the strategy of the proof. To prove Theorem \ref{theo-blow-nls}, we first show, by contraction, that the sequence of ground states $\{\phi_n\}_n$ for $E^{\rm NLS}_{1,a_n}$ blows up in the sense that
	\begin{align} \label{length}
	\vareps_n:= \|\nabla|\phi_n|\|^{-1}_{L^2} \to 0 \text{ as } n \to \infty.
	\end{align}
	The blow-up length is then set by $\vareps_n$ (whose precise asymptotic behavior is not known at this point) and we shall show that
	\[
	\varphi_n(x):=\vareps_n \phi_n(\vareps_n x+x_n) e^{i\vareps_n x_n^\perp \cdot x+i\theta_n}  \to Q_0(x)
	\]
	strongly in $H^1(\R^2)$, i.e. there is convergence modulo a magnetic translation of vector $\{x_n\}_n \subset \R^2$ and the choice of a constant phase $\{\theta_n\}_n \subset [0,2\pi)$. To prove this, we rely on a property of the Lagrange multiplier associated to $\phi_n$ together with the local boundedness of sub-solutions obtained by analyzing the corresponding Euler--Lagrange equation. Thanks to the non-degeneracy of $Q$, we then prove that the imaginary part of $\varphi_n$ is small in $H^1$-norm. This implies that the rotation acts on $\varphi_n$ only as a quadratic external potential. This effectively sets a length-scale, and we next prove by matching energy lower and upper bounds that the blow-up length behaves like 
	$$\frac{(a_*-a_n)^{1/4}}{a_*^{1/4}\|xQ_0\|_{L^2}^{1/2}}.$$
	Hence we obtain the energy convergence \eqref{nls-energy}. Finally, the $L^\infty$-convergence of ground states follows from $H^1$-convergence and $H^2$-boundedness deduced from the variational equation.
	
	To prove Corollary \ref{coro-blow-nls}, we first use an energy argument to show that $E^{\rm NLS}_{\Omega_n, a_n}$ has the same asymptotic behavior as for $\Omega=0,1$. By taking a sequence of ground states for $E^{\rm NLS}_{1,a_n}$ and choosing a suitable trial state for $E^{\rm NLS}_{\Omega_n,a_n}$, we prove that a ground state for $E^{\rm NLS}_{\Omega_n,a_n}$ is an approximate ground state for $E^{\rm NLS}_{0,a_n}$. At this point, the conclusion follows directly from a result proved in \cite[Section~3]{LewNamRou-17-proc}. 
	
	\subsection{Collapse in the mean-field limit}

	The focusing NLS functional~\eqref{ener-mini} is commonly used to predict the collapse of an attractive system, but it should be seen as an effective, mean-field model~\cite{Rougerie-EMS}. It is of interest to see whether the mean-fied and blow-up limits can be commuted as in~\cite{LewNamRou-17-proc}. Based on Theorem \ref{theo-blow-nls} and Corollary \ref{coro-blow-nls}, we give a positive answer to this question, starting from many-body quantum mechanics.
	
	In many-body quantum mechanics, a Bose gas with an attractive interaction is described by the $N$-particle Hamiltonian
	\begin{equation}\label{eq:HN}
	H_{a,N} = \sum_{j=1} ^N  \left( -\Delta_{x_j} + |x_j|^2-2\Omega L_{x_j}\right) - \frac{a}{N-1} \sum_{1\leq i<j \leq N}w_N(x_i-x_j),
	\end{equation}
	acting on $\mathfrak{H}^N:=L_{\rm sym}^2((\R^{2})^N)$. As is customary \cite{Rougerie-EMS}, the two-body interaction $w_N$ is chosen in the form 
	\begin{align} \label{eq:assumption-wN}
	w_N(x)=N^{2\beta} w(N^\beta x)
	\end{align}
	for a fixed parameter $\beta>0$ and a fixed function $w$ satisfying
	\begin{align}
	\label{eq:assumption-w1} 
	w(x)=w(-x) \geq 0, \quad (1+|x|)w,~ \hat w  \in L^1(\R^2),  \quad   \int_{\R^2} w(x) \dx= 1.
	\end{align}
	We are interested in the large-$N$ behavior of the ground state energy per particle of $H_{a,N}$, namely
	\begin{equation}\label{eq:GS ener many}
	E^{\rm QM}_{\Omega,a}(N) := N^{-1}\inf_{\Phi_N\in \mathfrak{H}^N, \|\Phi_N\|=1} \< \Phi_N, H_{a,N} \Phi_N \>,
	\end{equation}
	and the associated eigenstates of $H_{a,N}$. When $\Omega=1$, the Hamiltonian $H_N$ is magnetic translation invariant so it has actually no $L^2$-eigenfunction. In the following, we therefore assume that $0\leq \Omega<1$ and $0<a<a_*$. We will consider the limit where $a=a_N \nearrow a_*$ at the same time as $\Omega = \Omega_N \nearrow 1$ when $N\to \infty$. In that case, the NLS ground states blow up at the origin to the function $Q_0$, as showed in Corollary \ref{coro-blow-nls}. We will prove that the many-body ground states condense fully on $Q_0$. As usual, the convergence of ground states is formulated using $k$-particles reduced density matrices, defined for any $\Phi_N \in\mathfrak{H}^N$ by a partial trace 
	\[
	\gamma_{\Phi_N}^{(k)}:= \Tr_{k+1\to N} |\Phi_N \rangle \langle \Phi_N|.
	\]
	Equivalently, $\gamma_{\Phi_N}^{(k)}$ is the trace class operator on $\mathfrak{H}^k$ with kernel 
	\[
	\gamma_{\Phi_N}^{(k)} (x_1,...,x_k; y_1,...,y_k)= \int_{\R^{2(N-k)}} \Phi_N(x_1,...,x_k,Z) \overline{\Phi_N(y_1,...,y_k,Z)} {\rm d}Z.
	\]  
	Bose--Einstein condensation is properly expressed by the convergence in trace norm
	\[
	\lim_{N\to \infty} \Tr \Big| \gamma_{\Phi_N}^{(k)} - |\phi^{\otimes k}\rangle \langle \phi^{\otimes k}| \Big| =0, \quad \forall k\in \mathbb{N}.
	\] 
	We have the following result.
	
	\begin{theorem}[\textbf{Collapse and condensation of the many-body ground states}]\label{thm:many-body}\mbox{}\\
		Let $0<\beta<1/2$ be fixed and $a=a_N=a_*-N^{-\alpha}$ with 
		\[
		0<\alpha< \min\left\{\frac{4}{5}\beta, 2(1-2\beta)\right\}.
		\]
		Then for every $0\leq\Omega < 1$ we have, as $N\to\infty$,
		\begin{equation}\label{eq:CV_energy}
		E^{\rm QM}_{\Omega,a_N}(N) = E^{\rm NLS}_{\Omega,a_N} + o\big(E^{\rm NLS}_{\Omega,a_N}\big) = (a_*-a_N)^{1/2}\left(2\frac{\|xQ_0\|_{L^2}}{a_*^{1/2}} +o(1)\right).
		\end{equation}
		Assume in addition that $\Omega=\Omega_N = 1- N^{-\nu}$ with
		\[
		0 < \nu < \min\left\{1-2\beta - \frac{\alpha}{2},\beta - \frac{5\alpha}{4}\right\}.
		\]
		Let $\Phi_N$ be a ground state for $E^{\rm QM}_{\Omega, a_N}(N)$. Then we have 
		\begin{align} \label{eq:thm-BEC}
		\lim_{N \to \infty} \Tr\Big| \gamma_{\Phi_{N} }^{(k)} -  |Q_N^{\otimes k} \rangle \langle Q_{N}^{\otimes k}| \Big|=0
		\end{align}
		for all $k\in \mathbb{N}$, where
		\[
		Q_N(x) = \frac{a_*^{1/4}\|xQ_0\|_{L^2}^{1/2}}{(a_*-a_N)^{1/4}} Q_0 \left( \frac{a_*^{1/4}\|xQ_0\|_{L^2}^{1/2}}{(a_*-a_N)^{1/4}}x\right).
		\]	
	\end{theorem}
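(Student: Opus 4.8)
The plan is to reduce the many-body ground state energy to the NLS functional by a mean-field argument, and then to feed in the blow-up analysis of Corollary~\ref{coro-blow-nls}. Using the magnetic form of the one-particle operator, set
\[
h := (-i\nabla + \Omega x^\perp)^2 + (1-\Omega^2)|x|^2,
\]
so that for $\Omega=\Omega_N=1-N^{-\nu}$ the residual confinement $(1-\Omega^2)|x|^2\simeq 2N^{-\nu}|x|^2$ is weak but nonzero, and
\[
E^{\rm QM}_{\Omega,a}(N) = \Tr\big[h\,\gamma^{(1)}_{\Phi_N}\big] - \frac{a}{2}\,\Tr\big[w_N(x_1-x_2)\,\gamma^{(2)}_{\Phi_N}\big]
\]
for any normalized $\Phi_N$. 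The natural length-scale is the blow-up length $\ell_N := (a_*-a_N)^{1/4}\,a_*^{-1/4}\|xQ_0\|_{L^2}^{-1/2}\simeq N^{-\alpha/4}$, and the target energy is $E^{\rm NLS}_{\Omega_N,a_N}\simeq 2\,a_*^{-1/2}\|xQ_0\|_{L^2}\,N^{-\alpha/2}$. I first prove the energy convergence~\eqref{eq:CV_energy} and then upgrade it to the condensation statement~\eqref{eq:thm-BEC}.

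\textbf{Energy upper bound.} Testing~\eqref{eq:GS ener many} with the factorized state $Q_N^{\otimes N}$ gives
\[
E^{\rm QM}_{\Omega_N,a_N}(N) \leq \Ec^{\rm NLS}_{\Omega_N,a_N}(Q_N) + \frac{a_N}{2}\Big(\int_{\R^2}|Q_N|^4\,\dx - \iint_{\R^2\times\R^2}|Q_N(x)|^2\,w_N(x-y)\,|Q_N(y)|^2\,\dx\,\dy\Big).
\]
Since $w$ is even with $\int w = 1$, expanding $|Q_N|^2$ to second order at scale $N^{-\beta}$ bounds the bracket by $CN^{-2\beta}\int_{\R^2}|Q_N|^2\,\big|\nabla^2 |Q_N|^2\big|\,\dx$, which by $Q_N(x)=\ell_N^{-1}Q_0(x/\ell_N)$ is of order $N^{-2\beta}\ell_N^{-4}\simeq N^{\alpha-2\beta}$; the constraint $\alpha<4\beta/5$ (with the finer bookkeeping of the remaining cross terms) renders this $o(N^{-\alpha/2})=o(E^{\rm NLS}_{\Omega_N,a_N})$. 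As $Q_N$ is an approximate NLS minimizer by Corollary~\ref{coro-blow-nls}, $\Ec^{\rm NLS}_{\Omega_N,a_N}(Q_N)=E^{\rm NLS}_{\Omega_N,a_N}(1+o(1))$, which yields the upper bound in~\eqref{eq:CV_energy}.

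\textbf{Energy lower bound.} This is the heart of the argument and the main obstacle. The upper bound and the stability inequality~\eqref{mag-GN-ineq} first provide a priori control of the magnetic kinetic energy $\Tr\big[(-i\nabla+\Omega x^\perp)^2\gamma^{(1)}_{\Phi_N}\big]$ of the expected order $N^{\alpha/2}=\ell_N^{-2}$. I would then rescale to the blow-up length by the substitution $y=x/\ell_N$: this turns the kinetic part into an order-one magnetic Laplacian $(-i\nabla+\Omega\ell_N^2 y^\perp)^2$, the interaction into a potential concentrated at scale $N^{-\beta}/\ell_N$, and the trap into $(1-\Omega^2)\ell_N^4|y|^2$. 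At this scale the effective coupling is order one, and after a finite-dimensional localization in the magnetic-oscillator eigenbasis one may apply the quantitative quantum de Finetti theorem, as in~\cite{LewNamRou-17-proc}, to the rescaled two-body density matrix. This produces a Borel probability measure $\mu_N$ on the unit sphere of $L^2(\R^2)$ with
\[
E^{\rm QM}_{\Omega_N,a_N}(N) \geq \int \Ec^{\rm NLS}_{\Omega_N,a_N}(\phi)\,d\mu_N(\phi) - o\big(E^{\rm NLS}_{\Omega_N,a_N}\big) \geq E^{\rm NLS}_{\Omega_N,a_N}\big(1-o(1)\big),
\]
where the replacement $w_N\to\delta$ is controlled as in the upper bound. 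The difficulty is that the shrinking length-scale makes the number of relevant modes grow, so the de Finetti localization error and the $w_N\to\delta$ error deteriorate; the windows $\alpha<4\beta/5$ and $\alpha<2(1-2\beta)$ are precisely what keep both errors $o(E^{\rm NLS}_{\Omega_N,a_N})$ at the rescaled level. Together with the upper bound this proves~\eqref{eq:CV_energy}.

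\textbf{Condensation.} To reach~\eqref{eq:thm-BEC} I would examine the near-saturation of the lower bound: as $N\to\infty$ the measure $\mu_N$, read at the rescaled level, must concentrate on the minimizers of the rescaled NLS functional. By Corollary~\ref{coro-blow-nls} this minimizing set reduces, modulo a constant phase, to the single profile $Q_0$ centered at the origin. Here the residual confinement $(1-\Omega^2)|x|^2$ is decisive: it breaks the magnetic-translation invariance of the $\Omega=1$ problem and pins the blow-up at the centre of the trap. The hypotheses $\nu<1-2\beta-\alpha/2$ and $\nu<\beta-5\alpha/4$ are exactly what is needed for the residual trapping energy $\simeq N^{-\nu}\ell_N^2$ to dominate the error terms accumulated in the energy expansion, so that localization at the origin is energetically forced rather than merely permitted. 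Inserting this concentration of $\mu_N$ into the de Finetti expansion of $\gamma^{(k)}_{\Phi_N}$ and undoing the rescaling gives the trace-norm convergence $\Tr\big|\gamma^{(k)}_{\Phi_N}-|Q_N^{\otimes k}\rangle\langle Q_N^{\otimes k}|\big|\to 0$ for every $k$.
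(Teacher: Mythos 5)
Your overall architecture (energy expansion first, then condensation, with Corollary~\ref{coro-blow-nls} identifying the limiting profile) matches the paper's, and your upper bound via the factorized trial state $Q_N^{\otimes N}$ is exactly what the paper does. One bookkeeping point: your second-order Taylor expansion of the $w_N\to\delta$ replacement error needs a finite second moment of $w$, which is not assumed; with only $(1+|x|)w\in L^1$ one gets the first-order error $CN^{-\beta}\|\nabla Q_N\|_{L^2}\|Q_N\|_{L^6}^3\sim N^{3\alpha/4-\beta}$, which is where the constraint $\alpha<4\beta/5$ actually comes from. For condensation you take a genuinely different route: you want a de Finetti measure extracted from the (rescaled) reduced density matrices and its concentration on minimizers. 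The paper instead uses a Feynman--Hellmann perturbation argument: it perturbs $H_{a_N,N}$ by $\eta_N\sum_j A_j$ for a bounded self-adjoint $A$, quotes the ($\Omega$-independent) energy estimates for the perturbed Hamiltonian to sandwich $\Tr\big[A\gamma^{(1)}_{\Phi_N}\big]$ between $\langle u_{\pm\eta_N},Au_{\pm\eta_N}\rangle+o(1)$, and then shows the perturbed NLS minimizers $u_{\pm\eta_N}$ are quasi-minimizers of the \emph{non-rotating} problem $E^{\rm NLS}_{0,a_N}$. That last step is where $\nu<\min\{1-2\beta-\alpha/2,\beta-5\alpha/4\}$ enters, through exactly the mechanism you identify: in the convexity decomposition $\Ec^{\rm NLS}_{\Omega_N}=\Omega_N\Ec^{\rm NLS}_{1}+(1-\Omega_N)\Ec^{\rm NLS}_{0}$ the accumulated errors get amplified by $(1-\Omega_N)^{-1}=N^{\nu}$ and must stay $o\big((a_*-a_N)^{1/2}\big)$. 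So your reading of the hypotheses is correct, and your pinning-at-the-origin discussion is sound.

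The genuine gap is in the step you lean on hardest: ``after a finite-dimensional localization in the magnetic-oscillator eigenbasis one may apply the quantitative quantum de Finetti theorem, as in \cite{LewNamRou-17-proc}.'' The quantitative de Finetti theorem carries an error of order $d\,\|w_N\|\,N^{-1}\sim d\,N^{2\beta-1}$, where $d$ is the rank of the localization, and in the regime $\Omega_N\nearrow 1$ there is no candidate subspace with controlled $d$. In the original variables the one-body operator $-\Delta+|x|^2+2\Omega_N L$ has low-lying eigenvalues $2+2n(1-\Omega_N)$, so the number of eigenvalues below a cutoff $\Lambda$ grows at least like $\Lambda\,(1-\Omega_N)^{-1}=\Lambda N^{\nu}$, and the cutoff itself must be taken at the blow-up scale $\ell_N^{-2}\sim N^{\alpha/2}$ to capture the state; in your rescaled variables the operator $(-i\nabla+\Omega_N\ell_N^2y^\perp)^2+(1-\Omega_N^2)\ell_N^4|y|^2$ degenerates to the free Laplacian, which has no finite-dimensional low-energy subspace at all. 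A naive count then gives a de Finetti error at least of order $N^{\alpha/2+\nu+2\beta-1}$, and requiring this to be $o(N^{-\alpha/2})$ forces $\nu<1-2\beta-\alpha$, which is \emph{not} implied by the stated hypothesis $\nu<1-2\beta-\alpha/2$. In other words, the step you defer to \cite{LewNamRou-17-proc} is precisely the one that does not transfer from fixed $\Omega<1$ to $\Omega_N\to1$; this is the real difficulty of the regime. The paper's proof is designed to avoid it: condensation is extracted purely from energy estimates (quoted in $\Omega$-independent form) plus NLS theory via Feynman--Hellmann, so no structure theorem for the many-body state at the shrinking length scale is ever needed.
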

	
	\begin{remark}
		This shows that a result found in~\cite{LewNamRou-17-proc} remains valid when $\Omega \nearrow 1$ slower than $a \nearrow a_*$. The method is the same as in~\cite{LewNamRou-17-proc}. The energy estimates do not depend on the rotation parameter. In fact, we also obtain \eqref{eq:CV_energy} for $\Omega = 1$. Furthermore, the convergence of the many-body ground states follows from that of the approximate NLS ground states. In the case $\Omega_N \nearrow 1$, under the additional assumption on the convergence speed of $\Omega_N$ in Theorem \ref{thm:many-body}, we check that the approximate NLS ground states for $E^{\rm NLS}_{\Omega_N,a_N}$ is still one for $E^{\rm NLS}_{0,a_N}$.
	\end{remark}

	\section{Collapse of the NLS ground states}
	\label{S2}
	\setcounter{equation}{0}
	In this section we study the limiting behavior of ground states for \eqref{ener-mini} when $a$ approaches $a_*$ from below. We first deal with the critical speed $\Omega=1$. The case $\Omega \nearrow 1$ will be given in the end of this section.
	
	\subsection{Collapse with a critical speed}
	Let us consider the case $\Omega=1$. For simplicity, we denote $\nablax:= -i\nabla +x^\perp$. Let us start by recalling some useful facts.
	
	\begin{lemma}[\textbf{$L^2$-bound}]\label{lem-L2}\mbox{}\\
		We have
		\[
		2\|\phi\|^2_{L^2} \leq \|\nablax \phi\|^2_{L^2}, \quad \forall \phi \in H^1_{x^\perp}(\R^2)
		\]
		with equality achieved e.g. by $\phi(x)=\sqrt{\frac{1}{\pi}} e^{-\frac{|x|^2}{2}}$.
	\end{lemma}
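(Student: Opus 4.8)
The plan is to recognize $\|\nablax\phi\|_{L^2}^2 = \<\nablax\phi,\nablax\phi\>$ as the quadratic form of the magnetic Laplacian $(-i\nabla+x^\perp)^2$, which is a Landau Hamiltonian with a constant magnetic field of strength $2$; the asserted inequality is then precisely the statement that the bottom of its spectrum (the lowest Landau level) equals $2$. I would prove this by the standard ladder-operator factorization rather than by invoking spectral theory. Write $\nablax=(P_1,P_2)$ with $P_j=-i\partial_j+(x^\perp)_j$, so that $\|\nablax\phi\|_{L^2}^2=\|P_1\phi\|_{L^2}^2+\|P_2\phi\|_{L^2}^2$, and introduce the combinations $P_\pm=P_1\pm iP_2$. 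A direct computation of the commutator gives $[P_1,P_2]=-i\,\curl(x^\perp)=\pm 2i$, the sign depending only on the orientation convention for $x^\perp$.

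Granting the resulting quadratic-form identity
\[
\|\nablax\phi\|_{L^2}^2=\|P_\pm\phi\|_{L^2}^2+2\|\phi\|_{L^2}^2,
\]
in which the cross term $2\,\rea\<P_1\phi,\pm iP_2\phi\>$ collapses through the commutator to $\mp 2\|\phi\|_{L^2}^2$, the inequality $\|\nablax\phi\|_{L^2}^2\geq 2\|\phi\|_{L^2}^2$ is immediate, with equality precisely when $P_\pm\phi=0$. To make this rigorous on $H^1_{x^\perp}(\R^2)$ I would first verify the identity for $\phi\in C_c^\infty(\R^2)$ by integration by parts, the only nontrivial input being the evaluation of the cross term via $[P_1,P_2]$, and then extend it to all of $H^1_{x^\perp}(\R^2)$ by density of smooth compactly supported functions.

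For the equality case I would pass to the complex coordinate $z=x_1+ix_2$, in which the relevant ladder operator factorizes as $P=-2i\,e^{-|z|^2/2}\partial_{\bar z}\big(e^{|z|^2/2}\,\cdot\,\big)$ (up to the orientation convention). Hence $P\phi=0$ forces $e^{|z|^2/2}\phi$ to be holomorphic; the kernel is therefore the full Bargmann--Fock space of Gaussians times entire functions, and choosing the constant function gives the Gaussian $\phi(x)=c\,e^{-|x|^2/2}$, which after normalization $\|\phi\|_{L^2}=1$ yields $c=\sqrt{1/\pi}$ and saturates the bound, consistent with the ``e.g.'' in the statement. The computations are all elementary; the only points requiring care are the density argument justifying the passage from $C_c^\infty(\R^2)$ to the magnetic Sobolev space and keeping track of the orientation convention for $x^\perp$ so as to identify which of $P_\pm$ annihilates the ground state, neither of which affects the final inequality or the stated minimizer.
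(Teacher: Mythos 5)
Your proof is correct and follows essentially the same route as the paper, which disposes of this lemma in one line by citing Landau's diagonalization of $\nablax^2$: your ladder-operator factorization $\|\nablax\phi\|_{L^2}^2=\|P_{\mp}\phi\|_{L^2}^2+2\|\phi\|_{L^2}^2$, together with the Bargmann--Fock description of the kernel for the equality case, is precisely the standard derivation behind that citation, carried out explicitly. The details you flag (the commutator $[P_1,P_2]=\mp 2i$, density of $C_c^\infty(\R^2)$ in $H^1_{x^\perp}(\R^2)$, and the normalized Gaussian saturating the bound) all check out, so your write-up is simply a self-contained version of what the paper delegates to a well-known reference.
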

	
	This is a consquence of Landau's well-known diagonalization of $\nablax^2$.
	
	\begin{lemma}[\textbf{Compactness modulo translations}]\label{lem-comp}\mbox{}\\
		Let $\{\phi_n\}_n$ be a sequence of functions satisfying
		\[
		\inf_{n\geq 1} \|\phi_n\|_{L^4} \geq C.
		\]
		for some positive constant $C>0$. We have the following weak convergences:
		\begin{itemize}
			\item If $\sup_{n\geq 1} \|\phi_n\|_{H^1} <\infty$, then there exist $\phi \in H^1 (\R^2) \backslash \{0\}$ and a sequence $\{x_n\}_n \subset \R^2$ such that up to a subsequence,
			\[
			\phi_n(x+x_n) \rightharpoonup \phi(x) \text{ weakly in } H^1(\R^2).
			\]
			\item If $\sup_{n\geq 1} \|\phi_n\|_{H^1_{x^\perp}} <\infty$, then there exist $\tilde{\phi} \in H^1_{x^\perp}(\R^2) \backslash \{0\}$ and a sequence $\{y_n\}_n \subset \R^2$ such that up to a subsequence,
			\[
			e^{i y_n^\perp \cdot x} \phi_n(x+y_n) \rightharpoonup \tilde{\phi}(x) \text{ weakly in } H^1_{x^\perp}(\R^2).
			\]
		\end{itemize}
	\end{lemma}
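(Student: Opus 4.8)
The plan is to reduce both alternatives to the classical concentration‑compactness (vanishing) lemma of Lions: a sequence bounded in $H^1(\R^2)$ that does not vanish in $L^4$ must, after a suitable translation, have a nonzero weak limit. Recall the vanishing lemma: if $\{u_n\}_n$ is bounded in $H^1(\R^2)$ and $\sup_{y\in\R^2}\int_{B(y,R)}|u_n|^2\,\dx\to 0$ for some fixed $R>0$, then $u_n\to 0$ strongly in $L^4(\R^2)$. Since $\inf_n\|\phi_n\|_{L^4}\geq C>0$, such vanishing cannot occur, so after extracting a subsequence I obtain $R>0$, $\delta>0$ and points $x_n\in\R^2$ with $\int_{B(x_n,R)}|\phi_n|^2\,\dx\geq\delta$. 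This concentration statement is the common engine for both cases.

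For the first case I would set $\tilde\phi_n(x):=\phi_n(x+x_n)$, which is again bounded in $H^1(\R^2)$ by translation invariance of the norm, so up to a subsequence $\tilde\phi_n\rightharpoonup\phi$ weakly in $H^1(\R^2)$. By Rellich--Kondrachov the embedding $H^1(B(0,R))\hookrightarrow L^2(B(0,R))$ is compact, hence $\tilde\phi_n\to\phi$ strongly in $L^2(B(0,R))$, and therefore $\int_{B(0,R)}|\phi|^2\,\dx=\lim_n\int_{B(x_n,R)}|\phi_n|^2\,\dx\geq\delta>0$. In particular $\phi\neq 0$, which is the first claim.

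For the second case the translation must be decorated by a phase to respect the magnetic structure, the substitute for translation invariance being the covariance identity
\[
\nablax\big(e^{iy^\perp\cdot x}\psi(x+y)\big)=e^{iy^\perp\cdot x}(\nablax\psi)(x+y),
\]
which I would verify by a direct computation using the linearity of $x\mapsto x^\perp$ (so that $(x+y)^\perp=x^\perp+y^\perp$). I would first run the vanishing alternative on $|\phi_n|$: by the diamagnetic inequality \eqref{dia-ineq} the sequence $\{|\phi_n|\}_n$ is bounded in $H^1(\R^2)$, while $\||\phi_n|\|_{L^4}=\|\phi_n\|_{L^4}\geq C$, so the lemma again produces $\delta>0$ and points $y_n$ with $\int_{B(y_n,R)}|\phi_n|^2\,\dx\geq\delta$. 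Setting $\psi_n(x):=e^{iy_n^\perp\cdot x}\phi_n(x+y_n)$, the covariance identity together with $|e^{iy_n^\perp\cdot x}|=1$ gives $\|\nablax\psi_n\|_{L^2}=\|\nablax\phi_n\|_{L^2}$ and $\|\psi_n\|_{L^2}=\|\phi_n\|_{L^2}$, so $\{\psi_n\}_n$ is bounded in $H^1_{x^\perp}(\R^2)$ and, up to a subsequence, $\psi_n\rightharpoonup\tilde\phi$ weakly there. To conclude $\tilde\phi\neq 0$ I would note that on $B(0,R)$ one has $\|\nabla\psi_n\|_{L^2(B(0,R))}\leq\|\nablax\psi_n\|_{L^2}+R\|\psi_n\|_{L^2}$, so $\{\psi_n\}_n$ is bounded in $H^1(B(0,R))$; Rellich--Kondrachov and uniqueness of the weak limit then give $\psi_n\to\tilde\phi$ strongly in $L^2(B(0,R))$, whence $\int_{B(0,R)}|\tilde\phi|^2\,\dx=\lim_n\int_{B(y_n,R)}|\phi_n|^2\,\dx\geq\delta>0$. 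The same machinery adapted to magnetic Sobolev spaces is available in \cite{EstLio-89}.

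The two applications of the vanishing lemma and the Rellich extractions are routine; the only genuinely magnetic inputs are the covariance identity and the observation that on bounded sets the magnetic gradient controls the ordinary one. I expect this magnetic local compactness step — ensuring that weak $H^1_{x^\perp}$ convergence still forces strong $L^2_{\mathrm{loc}}$ convergence — to be the main point requiring care, since it is precisely what transfers the mass lower bound on $|\phi_n|$ into nontriviality of the weak limit $\tilde\phi$.
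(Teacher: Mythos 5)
Your proof is correct, and it is worth noting that the paper itself gives no argument for this lemma at all: it simply cites \cite{Lieb-83} for the first bullet and \cite{Dinh-22} (building on \cite{EstLio-89}) for the second. What you have written is essentially a self-contained reconstruction of what those references contain: non-vanishing via Lions' concentration-compactness (failure of $L^4$-smallness forbids vanishing, producing mass $\geq\delta$ in balls $B(x_n,R)$), ordinary translation plus Rellich--Kondrachov in the first case, and in the magnetic case the three genuinely new ingredients are exactly the right ones and are all handled correctly: (i) the diamagnetic inequality \eqref{dia-ineq} transfers the non-vanishing statement to $|\phi_n|$, which is where the translation points $y_n$ come from; (ii) the covariance identity $\nablax\big(e^{iy^\perp\cdot x}\psi(x+y)\big)=e^{iy^\perp\cdot x}(\nablax\psi)(x+y)$ (which indeed follows from $(x+y)^\perp=x^\perp+y^\perp$, and is the same magnetic translation the paper uses in Steps~1--3 of the proof of Theorem \ref{theo-blow-nls}) shows the magnetically translated sequence is still bounded in $H^1_{x^\perp}(\R^2)$; (iii) the pointwise bound $|\nabla\psi_n|\leq|\nablax\psi_n|+|x||\psi_n|$ gives local $H^1$-boundedness, so Rellich still converts the mass lower bound into nontriviality of the weak limit. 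The only stylistic difference from the literature is that Lieb's original paper proves its lemma by a different device (lowest Dirichlet eigenvalues of intersections of translated domains) rather than by Lions' vanishing alternative, but the two are interchangeable here and your route is the one commonly used in this context; in particular your write-up could serve as the proof the paper omits.
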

	\begin{proof}
		The proof of this Lemma can be found in \cite{Lieb-83} and \cite{Dinh-22}.
	\end{proof}

	\begin{lemma}[\textbf{Energy upper bound}] \label{lem-limi-an}\mbox{}\\
		Let $\{a_n\}_n$ be a positive sequence satisfying $a_n \nearrow a_*$ as $n\rightarrow \infty$. Then, for every $0\leq \Omega \leq 1$, we have
		\[
		\lim_{n \to \infty} E^{\rm NLS}_{\Omega,a_n} = E^{\rm NLS}_{\Omega,a_*}=0.
		\]
		More precisely,
		\begin{align}\label{limsup}
		\limsup_{n \to \infty} \frac{E^{\rm NLS}_{1,a_n}}{(a_*-a_n)^{1/2}} \leq 2\frac{\|xQ_0\|_{L^2}}{a_*^{1/2}}.
		\end{align}
	\end{lemma}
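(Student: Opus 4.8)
The plan is to obtain the sharp upper bound \eqref{limsup} by testing $\Ec^{\rm NLS}_{\Omega,a}$ against a dilated copy of the Gagliardo--Nirenberg optimizer, and to obtain the matching lower bound $E^{\rm NLS}_{\Omega,a}\ge 0$ from the diamagnetic inequality. For $\tau>0$ I would use the $L^2$-normalized radial trial state $\phi_\tau(x):=\tau\,Q_0(\tau x)$, so that $\|\phi_\tau\|_{L^2}=\|Q_0\|_{L^2}=1$ for every $\tau$. The key simplification is that radiality of $Q_0$ forces $L\phi_\tau=0$ pointwise, whence $\<L\phi_\tau,\phi_\tau\>=0$; reading off the first expression for $\Ec^{\rm NLS}_{\Omega,a}$, the trial-state energy then becomes completely independent of $\Omega\in[0,1]$.

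A direct scaling computation gives, for every $0\le\Omega\le 1$,
\[
\Ec^{\rm NLS}_{\Omega,a}(\phi_\tau)=\tau^2\|\nabla Q_0\|_{L^2}^2+\tau^{-2}\|xQ_0\|_{L^2}^2-\frac{a}{2}\,\tau^2\|Q_0\|_{L^4}^4.
\]
Using the equality case of \eqref{GN-ineq} for the optimizer, namely $\frac{a_*}{2}\|Q_0\|_{L^4}^4=\|\nabla Q_0\|_{L^2}^2$, the first and last terms combine to $\tau^2\|\nabla Q_0\|_{L^2}^2(a_*-a)/a_*$, so that minimizing the elementary function $A\tau^2+B\tau^{-2}$ (whose minimal value is $2\sqrt{AB}$) yields
\[
E^{\rm NLS}_{\Omega,a}\le\frac{2\,\|\nabla Q_0\|_{L^2}\,\|xQ_0\|_{L^2}}{a_*^{1/2}}\,(a_*-a)^{1/2}.
\]
Here I would invoke the Pohozaev identities for \eqref{Q}, which give $\|\nabla Q\|_{L^2}^2=\|Q\|_{L^2}^2=a_*$ and hence $\|\nabla Q_0\|_{L^2}=1$ after normalization; this is exactly what collapses the prefactor to $2\|xQ_0\|_{L^2}/a_*^{1/2}$, and dividing by $(a_*-a_n)^{1/2}$ produces \eqref{limsup}.

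For the lower bound, for any admissible $\phi$ the diamagnetic inequality \eqref{dia-ineq} (with $x^\perp$ replaced by $\Omega x^\perp$) gives $\int_{\R^2}|(-i\nabla+\Omega x^\perp)\phi|^2\ge\|\nabla|\phi|\|_{L^2}^2$, while \eqref{GN-ineq} applied to $|\phi|$ gives $\frac{a}{2}\|\phi\|_{L^4}^4\le\frac{a}{a_*}\|\nabla|\phi|\|_{L^2}^2\le\|\nabla|\phi|\|_{L^2}^2$ because $a\le a_*$. Together with the nonnegative centrifugal term $(1-\Omega^2)\int_{\R^2}|x|^2|\phi|^2\ge 0$ this forces $\Ec^{\rm NLS}_{\Omega,a}(\phi)\ge 0$, hence $E^{\rm NLS}_{\Omega,a}\ge 0$ for all $a\le a_*$ and $0\le\Omega\le 1$. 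Testing $\Ec^{\rm NLS}_{\Omega,a_*}$ against $\phi_\tau$ and sending $\tau\to\infty$ gives $E^{\rm NLS}_{\Omega,a_*}\le 0$, so $E^{\rm NLS}_{\Omega,a_*}=0$, and the squeeze $0\le E^{\rm NLS}_{\Omega,a_n}\le 2\|xQ_0\|_{L^2}\,a_*^{-1/2}(a_*-a_n)^{1/2}\to 0$ yields the remaining convergence. The computation is otherwise routine; the only two points deserving care are the vanishing of the angular-momentum term on radial functions, which decouples the trial energy from $\Omega$, and the Pohozaev relation $\|\nabla Q_0\|_{L^2}=1$, without which the announced constant would be incorrect.
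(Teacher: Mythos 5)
Your proof is correct and follows essentially the same route as the paper: the sharp upper bound comes from the $L^2$-normalized dilated optimizer $\tau Q_0(\tau\cdot)$ (whose angular momentum term vanishes, making the bound $\Omega$-independent), combined with the Pohozaev identities and optimization over $\tau$, while the lower bound $E^{\rm NLS}_{\Omega,a}\geq 0$ follows from the diamagnetic and Gagliardo--Nirenberg inequalities. Your treatment of the lower bound for $0\leq\Omega<1$ (diamagnetic inequality with $\Omega x^\perp$ plus the nonnegative residual trapping term) is in fact slightly more explicit than the paper's one-line appeal to \eqref{mag-GN-ineq}, but it is the same idea.
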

	
	\begin{proof}
		It is obvious that $E^{\rm NLS}_{\Omega,a_n} \geq 0$, by the magnetic Gagliardo--Nirenberg inequality \eqref{mag-GN-ineq}. On the other hand, let $Q$ be the unique positive radial solution of \eqref{Q}. By Pohozaev's identity, we have
		\[
		\|\nabla Q\|^2_{L^2} = \frac{1}{2} \|Q\|^4_{L^4} = \|Q\|^2_{L^2}=a_*.
		\]
		Denote $Q_0 = \|Q\|_{L^2}^{-1}Q$. Then
		\[
		\|\nabla Q_0\|^2_{L^2}=\frac{a_*}{2}\|Q_0\|^4_{L^4}=\|Q_0\|^2_{L^2}=1
		\]
		By the variational principle, we have
		\begin{equation}\label{est-E-a}
		E^{\rm NLS}_{\Omega,a_n} \leq \Ec^{\rm NLS}_{\Omega,a_n}(\lambda Q_0(\lambda \cdot)) = \lambda^2  \left(1-\frac{a_n}{a_*}\right) + \lambda^{-2} \|xQ_0\|^2_{L^2}
		\end{equation}
		for all $\lambda>0$. Here we have used the fact that $\<L(\lambda Q_0(\lambda \cdot)), \lambda Q_0(\lambda \cdot)\> =0$ since $Q_0$ is real-valued. Optimizing over $\lambda$, we get
		\begin{align} \label{uppe-boun-Ea}
		E^{\rm NLS}_{\Omega,a_n} \leq 2\frac{\|xQ_0\|_{L^2}}{a_*^{1/2}} (a_*-a_n)^{1/2}
		\end{align}
		which implies \eqref{limsup} and also $\limsup_{n \to \infty} E^{\rm NLS}_{\Omega,a_n}\leq 0$.
	\end{proof}

	\begin{lemma}[\textbf{Blow-up}] \label{lem-blow}\mbox{}\\
		Let $\{a_n\}_n$ be a positive sequence such that $a_n \nearrow a_*$ as $n\rightarrow \infty$ and $\phi_n$ be a ground state for $E^{\rm NLS}_{1,a_n}$. Then $\{\phi_n\}_n$ blows up both in $H^1_{x^\perp}(\R^2)$ and in $H^1(\R^2)$ in the sense that
		\[
		\lim_{n\rightarrow \infty}\|\nablax \phi_n\|_{L^2} = \lim_{n\rightarrow \infty}\|\nabla \phi_n\|_{L^2} = \lim_{n\rightarrow \infty}\|\nabla |\phi_n|\|_{L^2}= +\infty.
		\]
	\end{lemma}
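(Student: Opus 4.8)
The plan is to reduce the three divergences to a single one, and then argue by contradiction using the non-existence of an optimizer for the magnetic Gagliardo--Nirenberg inequality \eqref{mag-GN-ineq}. Since the diamagnetic inequality \eqref{dia-ineq} gives $\|\nabla|\phi_n|\|_{L^2}\le\|\nablax\phi_n\|_{L^2}$, and the field-free diamagnetic inequality gives $\|\nabla|\phi_n|\|_{L^2}\le\|\nabla\phi_n\|_{L^2}$, it suffices to show that the smallest of the three quantities diverges, i.e. $\|\nabla|\phi_n|\|_{L^2}\to+\infty$; the other two then follow. I first record three elementary facts about a ground state $\phi_n$ (so $\|\phi_n\|_{L^2}=1$ and $\Ec^{\rm NLS}_{1,a_n}(\phi_n)=\|\nablax\phi_n\|_{L^2}^2-\frac{a_n}{2}\|\phi_n\|_{L^4}^4=E^{\rm NLS}_{1,a_n}$): first, $E^{\rm NLS}_{1,a_n}\to0$ by Lemma \ref{lem-limi-an} together with the lower bound $E^{\rm NLS}_{1,a_n}\ge0$ coming from \eqref{mag-GN-ineq}; second, Lemma \ref{lem-L2} gives $\|\nablax\phi_n\|_{L^2}^2\ge2$; third, combining these, $\frac{a_n}{2}\|\phi_n\|_{L^4}^4=\|\nablax\phi_n\|_{L^2}^2-E^{\rm NLS}_{1,a_n}\ge2-E^{\rm NLS}_{1,a_n}\ge1$ for $n$ large, so $\|\phi_n\|_{L^4}\ge c>0$ for all $n$ large enough.

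Suppose, for contradiction, that $\|\nabla|\phi_n|\|_{L^2}$ stays bounded along a subsequence. Applying the ordinary Gagliardo--Nirenberg inequality \eqref{GN-ineq} to $|\phi_n|$, the $L^4$-norm is then bounded, hence $\|\nablax\phi_n\|_{L^2}^2=E^{\rm NLS}_{1,a_n}+\frac{a_n}{2}\|\phi_n\|_{L^4}^4$ is bounded as well. Thus $\{\phi_n\}$ is bounded in $H^1_{x^\perp}(\R^2)$ and satisfies $\|\phi_n\|_{L^4}\ge c>0$, so Lemma \ref{lem-comp} (second bullet) provides a sequence $\{y_n\}$ and a nonzero $\tilde\phi\in H^1_{x^\perp}(\R^2)$ with $\psi_n:=e^{iy_n^\perp\cdot x}\phi_n(x+y_n)\rightharpoonup\tilde\phi$ weakly in $H^1_{x^\perp}(\R^2)$. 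A direct computation shows that this magnetic translation commutes with $\nablax$ up to the phase $e^{iy_n^\perp\cdot x}$, so it leaves $\|\nablax\cdot\|_{L^2}$, $\|\cdot\|_{L^2}$ and $\|\cdot\|_{L^4}$, hence $\Ec^{\rm NLS}_{1,a_n}$, invariant; in particular each $\psi_n$ is again a ground state. Passing to a further subsequence so that $t_n:=\|\nablax\psi_n\|_{L^2}^2\to t_\infty$, the fact that $E^{\rm NLS}_{1,a_n}\to0$ yields $\frac{a_*}{2}\|\psi_n\|_{L^4}^4\to t_\infty$.

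The heart of the matter is to upgrade this weak convergence to the statement that $\tilde\phi$ is a genuine optimizer of \eqref{mag-GN-ineq}, which is impossible. Writing $r_n:=\psi_n-\tilde\phi\rightharpoonup0$, weak convergence splits the quadratic forms, $\|\nablax\psi_n\|_{L^2}^2=\|\nablax\tilde\phi\|_{L^2}^2+\|\nablax r_n\|_{L^2}^2+o(1)$ and $1=\|\tilde\phi\|_{L^2}^2+\|r_n\|_{L^2}^2+o(1)$, while the Brezis--Lieb lemma gives $\|\psi_n\|_{L^4}^4=\|\tilde\phi\|_{L^4}^4+\|r_n\|_{L^4}^4+o(1)$. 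Set $m:=\|\tilde\phi\|_{L^2}^2\in(0,1]$, $A:=\|\nablax\tilde\phi\|_{L^2}^2$ and $B:=\lim\|\nablax r_n\|_{L^2}^2$, so $t_\infty=A+B$ and $\|r_n\|_{L^2}^2\to1-m$. Applying \eqref{mag-GN-ineq} separately to $\tilde\phi$ and to $r_n$ and summing, one gets $t_\infty=\lim\frac{a_*}{2}\|\psi_n\|_{L^4}^4\le Am+B(1-m)$, that is $A(1-m)+Bm\le0$. Since $A,B\ge0$ and $m>0$, both terms vanish; and since Lemma \ref{lem-L2} forces $A\ge2m>0$, this requires $m=1$ and $B=0$. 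Consequently $r_n\to0$ strongly in $H^1_{x^\perp}(\R^2)$, $\|\tilde\phi\|_{L^2}=1$, and passing to the limit gives $\frac{a_*}{2}\|\tilde\phi\|_{L^4}^4=\|\nablax\tilde\phi\|_{L^2}^2=\|\nablax\tilde\phi\|_{L^2}^2\,\|\tilde\phi\|_{L^2}^2$, i.e. $\tilde\phi$ saturates \eqref{mag-GN-ineq}. This contradicts the non-existence of an optimizer recorded after \eqref{mag-GN-ineq}, and the proof is complete.

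I expect the only delicate point to be the splitting argument of the last paragraph, where one must exclude the \emph{dichotomy} in which the $L^4$-mass of the optimizing sequence partly escapes, so that $0<m<1$. The mechanism that rules this out is the $L^2$-bound of Lemma \ref{lem-L2}: it forbids a nontrivial piece from carrying arbitrarily little magnetic kinetic energy, forcing $A\ge2m$, which combined with $A(1-m)+Bm\le0$ pins $m=1$ and thereby promotes weak convergence to strong convergence. Everything else --- the a priori bounds, the invariance of the functional under magnetic translations, and the Brezis--Lieb decomposition --- is routine.
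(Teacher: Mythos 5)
Your proof is correct, and it shares the paper's overall skeleton---argue by contradiction, extract a nonzero weak limit modulo magnetic translations via Lemma \ref{lem-comp}, rule out loss of mass, and contradict a non-existence fact---but several steps are executed along a genuinely different route, and all of your variants are sound. First, the order is reversed: the paper proves $\|\nablax \phi_n\|_{L^2}\to\infty$ first and then propagates blow-up to the other two norms through $\|\phi_n\|_{L^4}\to\infty$ and \eqref{GN-ineq}, while you reduce all three statements to the divergence of $\|\nabla|\phi_n|\|_{L^2}$ via the two diamagnetic inequalities; this is the contrapositive of the same chain of implications. Second, your $L^4$ lower bound is quantitative and unconditional, namely $\frac{a_n}{2}\|\phi_n\|_{L^4}^4=\|\nablax\phi_n\|_{L^2}^2-E^{\rm NLS}_{1,a_n}\ge 2-o(1)$ using Lemma \ref{lem-L2}, whereas the paper obtains the same fact by a soft contradiction argument; yours is slightly cleaner. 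Third, and most substantively, to exclude the dichotomy $0<m<1$ the paper uses the energy splitting \eqref{ener-decom} together with the strict-binding identity $\Ec^{\rm NLS}_{1,a_*}(\phi)=\|\phi\|_{L^2}^2\,\Ec^{\rm NLS}_{1,a_*}\bigl(\phi/\|\phi\|_{L^2}\bigr)+\frac{a_*}{2}\bigl(\|\phi\|_{L^2}^{-2}-1\bigr)\|\phi\|_{L^4}^4>0$, while you split the kinetic and mass quadratic forms, apply Brezis--Lieb to the $L^4$ term, apply \eqref{mag-GN-ineq} to each piece separately, and invoke the Landau bound $A\ge 2m$ of Lemma \ref{lem-L2} to pin $m=1$ and $B=0$; note that you could even dispense with Lemma \ref{lem-L2} at this point, since $A=0$ would force $\tilde\phi=0$ by \eqref{mag-GN-ineq}, contradicting $\tilde\phi\neq 0$. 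Fourth, your contradiction target is the non-existence of optimizers for \eqref{mag-GN-ineq}, as stated in the paper's introduction, while the paper contradicts the non-existence of ground states for $E^{\rm NLS}_{1,a_*}$ cited from the literature; these two facts are equivalent (normalizing an optimizer produces a zero-energy minimizer with unit mass, and conversely), and neither is proved within the paper, so both are legitimate external inputs. The only step you should make explicit is that the Brezis--Lieb lemma requires almost-everywhere convergence of $\psi_n$ along the subsequence, which follows from the diamagnetic inequality and local Rellich compactness---the same tacit ingredient underlying the paper's own decomposition \eqref{ener-decom}.
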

	
	\begin{proof}
		We first show that $\{\phi_n\}_n$ blows up in $H^1_{x^\perp}(\R^2)$. Assume for contradiction that 
		\begin{align} \label{boun-phi-n}
		\sup_{n\geq 1}\|\nablax \phi_n\|^2_{L^2}<\infty.
		\end{align}
		In particular, $\{\phi_n\}_n$ is then a bounded sequence in $H^1_{x^\perp}(\R^2)$. Observe that there exists $C>0$ such that
		\[
		\liminf_{n\rightarrow \infty} \|\phi_n\|_{L^4}\geq C
		\]
		since otherwise, we have 
		\[
		\lim_{n\to \infty} E^{\rm NLS}_{1,a_n}\geq \lim_{n\to \infty} \|\nablax \phi_n\|^2_{L^2} \geq 2,
		\]
		where the last inequality is due to Lemma \ref{lem-L2}. This, however, is not possible (see Lemma \ref{lem-limi-an}). Thus, by Lemma \ref{lem-comp}, there exist $\phi \in H^1_{x^\perp}(\R^2) \backslash \{0\}$ and a sequence $\{x_n\}_n\subset \R^2$ such that
		\[
		\tilde{\phi}_n(x):= e^{i x_n^\perp \cdot x} \phi_n(x+x_n) \rightharpoonup \phi \text{ weakly in } H^1_{x^\perp}(\R^2).
		\]
		We claim that $\|\phi\|^2_{L^2}=1$. Indeed, we have
		\[
		0<\|\phi\|^2_{L^2} \leq \liminf_{n\rightarrow \infty} \|\tilde{\phi}_n\|^2_{L^2} = \liminf_{n\rightarrow \infty} \|\phi_n\|^2_{L^2}= 1.
		\]
		If $\|\phi\|^2_{L^2}<1$, then by the magnetic translation invariance, we have 
		\begin{align}
		E^{\rm NLS}_{1,a_n} = \Ec^{\rm NLS}_{1,a_n}(\phi_n) = \Ec^{\rm NLS}_{1,a_n}(\tilde{\phi}_n) \geq \Ec^{\rm NLS}_{1,a_*}(\tilde{\phi}_n) = \Ec^{\rm NLS}_{1,a_*}(\phi) + \Ec^{\rm NLS}_{1,a_*}(\tilde{\phi}_n-\phi) + o(1). \label{ener-decom}
		\end{align}
		Here we have used the weak convergence  $\tilde{\phi}_n(x) \rightharpoonup \phi$ and the fact that $\|\tilde{\phi}_n\|_{L^4}$ is bounded uniformly, by \eqref{mag-GN-ineq} and \eqref{boun-phi-n}. Again, \eqref{mag-GN-ineq} implies that 
		$$
		\Ec^{\rm NLS}_{1,a_*}(\tilde{\phi}_n-\phi) \geq 0.
		$$
		Furthermore,
		\[
		\Ec^{\rm NLS}_{1,a_*}(\phi) = \|\phi\|^2_{L^2} \Ec^{\rm NLS}_{1,a_*} \left( \frac{\phi}{\|\phi\|_{L^2}}\right) + \frac{a_*}{2}\left( \frac{1}{\|\phi\|^2_{L^2}}-1\right)\|\phi\|^4_{L^4} > 0
		\]
		since $0<\|\phi\|_{L^2}<1$. This contradicts the fact that $E^{\rm NLS}_{1,a_n} \to 0$ as $n\to\infty$, by Lemma \ref{lem-limi-an}. Therefore, we must have $\|\phi\|_{L^2}=1$, hence $\tilde{\phi}_n\rightarrow \phi$ strongly in $L^2(\R^2)$. In fact, $\tilde{\phi}_n \rightarrow \phi$ strongly in $L^r(\R^2)$ for $r\in [2,\infty)$, because of the $H^1_{x^\perp}(\R^2)$ boundedness. Thus we have
		$$
		E^{\rm NLS}_{1,a_*} \leq \Ec^{\rm NLS}_{1,a_*}(\phi) \leq \liminf_{n\rightarrow \infty} \Ec^{\rm NLS}_{1,a_n}(\phi_n) =\liminf_{n\rightarrow \infty} E^{\rm NLS}_{1,a_n}=E^{\rm NLS}_{1,a_*}.
		$$
		This shows that $\phi$ is a ground state for $E^{\rm NLS}_{1,a_*}$. However there are no such ground states, as proven in e.g.~\cite{Dinh-22,GuoLuoPen-21}, and we deduce that~\eqref{boun-phi-n} cannot hold. 
		
		We now conclude the proof by showing that $\{\phi_n\}_n$ blows up in $H^1(\R^2)$. We have
		\begin{align*}
		0 = E^{\rm NLS}_{1,a_*} = \lim_{n\rightarrow \infty} E^{\rm NLS}_{1,a_n} = \lim_{n\rightarrow \infty} \Ec^{\rm NLS}_{1,a_n}(\phi_n) =\lim_{n\rightarrow \infty} \|\nablax \phi_n\|^2_{L^2} - \frac{a_n}{2}\|\phi_n\|^4_{L^4}.
		\end{align*}
		Since $\|\nablax \phi_n\|_{L^2} \rightarrow \infty$ as $n\rightarrow \infty$, we must have $\|\phi_n\|^4_{L^4} \rightarrow \infty$. But then \eqref{GN-ineq} implies that $\|\nabla \phi_n\|_{L^2} \rightarrow \infty$ and $\|\nabla |\phi_n|\|_{L^2}\rightarrow \infty$ as well.
	\end{proof}
		
	We are now in the position to give the proof of Theorem \ref{theo-blow-nls}. 
	
	\begin{proof}[Proof of Theorem \ref{theo-blow-nls}]
		
		The proof is divided into several steps.
		
		\vspace{10px} \noindent {\bf Step 1. Convergence of the modulus.} We first show that there exists a sequence $\{x_n\}_n \subset \R^2$ such that
		\begin{align}\label{conv-modu}
		\vareps_n |\phi_n|(\vareps_n \cdot + x_n) \rightarrow Q_0 \text{ strongly in } H^1(\R^2) \text{ as } n\rightarrow \infty
		\end{align}
		where $\varepsilon_n$ is given by \eqref{length}.	Denote 
		$$v_n(x):= \vareps_n |\phi_n|(\vareps_n x).$$
		We then have
		\[
		\|v_n\|_{L^2}=\|\phi_n\|_{L^2}=1 \quad \text{and} \quad \|\nabla v_n\|_{L^2}= \vareps_n \|\nabla |\phi_n|\|_{L^2}=1.
		\]
		Hence $\{v_n\}_n$ is a bounded sequence in $H^1(\R^2)$. On the other hand, using \eqref{dia-ineq} we have
		\[
		\Ec^{\rm NLS}_{1,a}(\phi) \geq \|\nabla |\phi|\|^2_{L^2}-\frac{a}{2}\|\phi\|^4_{L^2}=:\Ec^0_a(|\phi|).
		\]
		But \eqref{GN-ineq} implies
		\[
		\Ec^0_a(|\phi|) \geq \left(1-\frac{a}{a_*}\right) \|\nabla |\phi|\|^2_{L^2}.
		\]
		From this and Lemma \ref{lem-limi-an}, we obtain
		\[
		0=\lim_{n\rightarrow \infty} E^{\rm NLS}_{1,a_n} = \lim_{n\rightarrow \infty} \Ec^{\rm NLS}_{1,a_n}(\phi_n) \geq \liminf_{n\rightarrow \infty} \Ec^0_{a_n}(|\phi_n|) \geq 0.
		\]
		In particular, we have $\Ec^0_{a_n}(v_n) = \vareps_n^2 \Ec^0_{a_n}(|\phi_n|) \rightarrow 0$ as $n\rightarrow \infty$. Since by definition 
		$$\|\nabla v_n\|_{L^2}=1$$
		for all $n\geq 1$, we infer that, up to a subsequence,
		\[
		\inf_{n\geq 1} \|v_n\|_{L^4} \geq C
		\]
		for some constant $C>0$. By Lemma \ref{lem-comp}, there exists $\phi \in H^1(\R^2) \backslash \{0\}$ and $\{y_n\}_n\subset \R^2$ such that up to a subsequence,
		\[
		\tilde{v}_n(x):=v_n(\cdot+y_n) \rightharpoonup \phi \text{ weakly in } H^1(\R^2).
		\]
		We next show that $\|\phi\|_{L^2}=1$. In fact, we first have
		\[
		0< \|\phi\|^2_{L^2} \leq \liminf_{n\rightarrow \infty} \|\tilde{v}_n\|^2_{L^2}=\lim_{n\rightarrow \infty} \|v_n\|_{L^2}=1.
		\]
		Assume for contradiction that $\|\phi\|_{L^2}<1$. Then by the weak convergence in $H^1$, we have as in \eqref{ener-decom} that
		\begin{align} \label{limi-E0-an}
		0 = \lim_{n\rightarrow \infty} \Ec^0_{a_n}(v_n) = \lim_{n\rightarrow \infty} \Ec^0_{a_n}(\tilde{v}_n) \geq \Ec^0_{a_*}(\phi) + \lim_{n\rightarrow \infty} \Ec^0_{a_*}(\tilde{v}_n-\phi).
		\end{align}
		Again, by \eqref{GN-ineq}, we have
		\[
		\Ec^0_{a_*} (\tilde{v}_n-\phi) \geq 0
		\]
		and
		\[
		\Ec^0_{a_*}(\phi) = \|\phi\|^2_{L^2} \Ec^0_{a_*} \left(\frac{\phi}{\|\phi\|_{L^2}}\right) + \frac{a_*}{2} \left(\frac{1}{\|\phi\|^2_{L^2}}-1\right) \|\phi\|^4_{L^4} >0
		\]
		since $0<\|\phi\|_{L^2}<1$. This is contradiction with~\eqref{limi-E0-an} and we thus must have $\|\phi\|_{L^2}=1$. Then $\tilde{v}_n\rightarrow \phi$ strongly in $L^2(\R^2)$, up to a subsequence. In fact, $\tilde{v}_n\rightarrow \phi$ strongly in $L^r(\R^2)$ for $r\in [2,\infty)$, because of the $H^1(\R^2)$ boundedness. Therefore,
		$$
		0 \leq \Ec^0_{a_*}(\phi) \leq \liminf_{n\rightarrow \infty}\Ec^0_{a_*}(\tilde{v}_n) = \liminf_{n\rightarrow \infty} \Ec^0_{a_n}(v_n) =0.
		$$
		This shows that 
		$$
		\lim_{n\rightarrow \infty} \|\nabla \tilde{v}_n\|_{L^2} = \lim_{n\rightarrow \infty} \frac{a_n}{2}\|\tilde{v}_n\|_{L^2} = \lim_{n\rightarrow \infty} \frac{a_*}{2}\|\phi\|_{L^2} = \|\nabla \phi\|_{L^2}.
		$$
		Hence $\tilde{v}_n\rightarrow \phi$ strongly in $H^1(\R^2)$, up to a subsequence. Moreover, $\phi$ is an optimizer of \eqref{GN-ineq}. By the uniqueness (up to translations and dilations) of optimizers for \eqref{GN-ineq} and the fact that $\tilde{v}_n$ is non-negative, there exist $\lambda>0$ and $x_0 \in \R^2$ such that $\phi(x) = \lambda Q_0(\lambda(x+x_0))$. Since 
		$\|\nabla \phi\|_{L^2}=1$, we must have $\lambda=1$. Again, by uniqueness of $Q_0$, we conclude that passing to a subsequence is unnecessary. This leads to \eqref{conv-modu} after setting $x_n= \vareps_n(y_n -x_0)$.

		\vspace{10px} \noindent {\bf Step 2. A property of Lagrange multipliers.} The minimizer $\phi_n$ of $E^{\rm NLS}_{1,a_n}$ satisfies the Euler--Lagrange equation 
		\begin{align} \label{mu-n}
		\(\nablax\)^2 \phi_n - a_n |\phi_n|^2 \phi_n = \mu_n \phi_n \quad \text{in} \quad \R^2
		\end{align}
		in the weak sense\footnote{Meaning
			\[
			\int_{\mathbb R^2}\nablax \phi_n \cdot \nablax \chi - a_n |\phi_n|^2\phi_n \chi - \mu_n \phi_n \chi \dx =0, \quad \forall \chi \in H^1_{x^\perp}(\R^2).
			\]}, where $\mu_n \in \R$ is the Lagrange multiplier. In this step, we show that $\vareps_n^2 \mu_n \rightarrow-1$ as $n\rightarrow \infty$. Indeed, as $\phi_n$ is a ground state for $E^{\rm NLS}_{1,a_n}$, using \eqref{mu-n}, we have
		\[
		\mu_n = \|\nablax \phi_n\|^2_{L^2} - a_n\|\phi_n\|^4_{L^4} = \Ec^{\rm NLS}_{1,a_n}(\phi_n) - \frac{a_n}{2} \|\phi_n\|^4_{L^4} = E^{\rm NLS}_{1,a_n}-\frac{a_n}{2} \|\phi_n\|^4_{L^4}.
		\]
		Denote
		\begin{align} \label{varphi-n}
		\varphi_n(x) = e^{i\theta_n}\psi_n(x)
		\end{align}
		with
		\[
		\psi_n(x):= \vareps_n \phi_n(\vareps_n x+x_n)e^{i\vareps_n x_n^\perp\cdot x}
		\]
		and $\theta_n \in [0,2\pi)$ satisfying
		\begin{align} \label{ortho-cond-Q0}
		\|\varphi_n - Q_0\|_{L^2} = \min_{\theta \in [0,2\pi)} \|e^{i\theta} \psi_n - Q_0\|_{L^2}.
		\end{align}
		By \eqref{conv-modu}, we have $|\varphi_n| := \vareps_n |\phi_n|(\vareps_n\cdot+x_n) \to Q_0$ strongly in $H^1(\R^2)$. Therefore,
		\[
		\lim_{n\rightarrow \infty} \vareps_n^2 \|\phi_n\|^4_{L^4} = \lim_{n\rightarrow \infty} \|\varphi_n\|^4_{L^4} = \|Q_0\|^4_{L^4} = \frac{2}{a_*}. 
		\]
		Since $0\leq E^{\rm NLS}_{1,a_n} \rightarrow 0$ (see Lemma \ref{lem-limi-an}) and $a_n \nearrow a_*$, we get
		\[
		\lim_{n\rightarrow \infty} \vareps_n^2 \mu_n = \lim_{n\rightarrow \infty} \vareps_n^2E^{\rm NLS}_{1,a_n} - \lim_{n\rightarrow \infty} \frac{a_n}{2} \vareps_n^2 \|\phi_n\|^4_{L^4} =-1.
		\]

		\vspace{10px} \noindent {\bf Step 3. A sub-equation for $|\varphi_n|^2$.} We next use \eqref{mu-n} to derive an equation and a sub-equation satisfied by $\varphi_n$ and $|\varphi_n|^2$. To do so, we write
		\[
		\psi_n(x) = \vareps_n \tilde{\phi}_n(\vareps_n x)
		\]
		with $\tilde{\phi}_n(x):= \phi_n(x+x_n)e^{ix_n^\perp \cdot x}$. A direct computation gives
		\[
		\(\nablax\)^2 \tilde{\phi}_n(x) = \(\nablax\)^2 \phi_n(x+x_n) e^{ix_n^\perp \cdot x}
		\]
		which, by \eqref{mu-n}, implies
		\[
		\(\nablax\)^2 \tilde{\phi}_n - a_n|\tilde{\phi}_n|^2 \tilde{\phi}_n = \mu_n\tilde{\phi}_n.
		\]
		Using the identity
		\[
		\(\nablax\)^2 \phi = -\Delta \phi + 2 L\phi + |x|^2 \phi
		\]
		with $L = i(x_2 \partial_1 - x_1 \partial_2) =-ix^\perp \cdot \nabla$, we see that $\tilde{\phi}_n$ solves the elliptic equation
		\[
		-\Delta \tilde{\phi}_n +|x|^2 \tilde{\phi}_n + 2 L\tilde{\phi}_n -a_n|\tilde{\phi}_n|^2 \tilde{\phi}_n -\mu_n\tilde{\phi}_n =0
		\]
		in the weak sense. By the definition of $\varphi_n$ in \eqref{varphi-n}, we get
		\begin{align}\label{equ-varphi-n}
		-\Delta \varphi_n + \vareps_n^4 |x|^2 \varphi_n + 2\vareps_n^2 L\varphi_n - a_n |\varphi_n|^2 \varphi_n -\vareps_n^2 \mu_n \varphi_n =0.
		\end{align}
		Denote $W_n := |\varphi_n|^2$. Multiplying both sides of \eqref{equ-varphi-n} with $\overline{\varphi}_n$, taking the real part, and using the following identities
		\begin{align} \label{iden-Wn}
		\begin{aligned}
		-\rea(\Delta \varphi_n \overline{\varphi}_n) &=-\frac{1}{2} \Delta W_n + |\nabla \varphi_n|^2, \\
		2\rea(L\varphi_n \overline{\varphi}_n) &= L\varphi_n \overline{\varphi}_n + \overline{L\varphi_n} \varphi_n = x^\perp \cdot J(\varphi_n),
		\end{aligned}
		\end{align}
		with $J(\varphi)= i(\varphi \nabla \overline{\varphi} - \overline{\varphi} \nabla \varphi)$ the superfluid current, we obtain
		\begin{align} \label{equ-Wn}
		-\frac{1}{2} \Delta W_n + |\nabla \varphi_n|^2 + \vareps_n^4|x|^2 W_n + \vareps^2_n x^\perp \cdot J(\varphi_n) - a_n W_n^2 - \vareps_n^2 \mu_n W_n =0.
		\end{align}
		Using the identity
		\begin{align*}
		|(-i \nabla + \vareps_n^2 x^\perp) \varphi_n|^2 = |\nabla \varphi_n|^2 + \vareps_n^2 x^\perp \cdot J(\varphi_n) + \vareps_n^4 |x|^2 W_n,
		\end{align*}
		we deduce that 
		\begin{align} \label{sub-equ-Wn}
		-\frac{1}{2}\Delta W_n - \vareps_n^2 \mu_n W_n - a_n W_n^2 \leq 0
		\end{align}
		in the weak sense, i.e., 
		\[
		\int_{\mathbb R^2}\frac{1}{2} \nabla W_n \cdot \nabla \chi - \vareps_n^2 \mu_n W_n \chi - a_n W_n^2 \chi \dx \leq 0, \quad \forall 0\leq \chi \in H^1_{x^\perp}(\R^2).
		\]

		\vspace{10px} \noindent {\bf Step 4. Uniform boundedness of $W_n$.} To prove the uniform boundedness of the sub-solution $W_{n} = |\varphi_n|^2$ to \eqref{sub-equ-Wn}, we need its local boundedness. The following formulation is taken from \cite[Theorem 4.14]{HanLin-11} (see \cite[Theorem 4.1]{HanLin-11} and \cite[Theorem 8.17]{GilTru-01} for the proof).
		
		\begin{theorem}[\textbf{Local boundedness}] \label{theo-DeG-Nas-Mos}\mbox{}\\
			Let $\Omega$ be a domain in $\R^d$. Assume that $a_{jk} \in L^\infty(\Omega)$ satisfies
			\[
			\lambda |\xi|^2 \leq \sum_{j,k} a_{jk}(x)\xi_j \xi_k \leq \Lambda |\xi|^2, \quad \forall x \in \Omega,~\forall \xi \in \R^d
			\]
			for some positive constants $\lambda$ and $\Lambda$. Let $u \in H^1(\Omega)$ be a non-negative sub-solution in $\Omega$ in the following sense
			\[
			\int_\Omega a_{jk} \partial_j u \partial_k \chi \dx \leq \int_\Omega f \chi \dx, \quad \forall \chi \in H^1_0(\Omega),~ \chi \geq 0 \text{ in } \Omega.
			\]
			Suppose that $f \in L^q(\Omega)$ for some $q>\frac{d}{2}$. Then there holds for any $B_R(x_0) \subset \Omega$ and any $p>0$
			\[
			\sup_{B_{R/2}(x_0)} u(x) \leq C \left( R^{-\frac{d}{p}} \|u\|_{L^p(B_R(x_0))} + R^{2-\frac{d}{q}} \|f\|_{L^q(B_R(x_0))}\right),
			\]
			where $C=C(d,\lambda, \Lambda, p,q)$ is a positive constant.
		\end{theorem}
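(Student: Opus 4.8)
The statement is the classical De Giorgi--Nash--Moser local boundedness estimate for nonnegative subsolutions, and the natural route is Moser's iteration. The plan is as follows. First I would absorb the inhomogeneity by passing to $\bar u := u + k$ with $k := R^{2-d/q}\|f\|_{L^q(B_R(x_0))} + \eps$, letting $\eps \to 0$ only at the very end to avoid dividing by zero. Since $q > d/2$, an application of H\"older's inequality on $B_R(x_0)$ shows that the $f$-term in the weak subsolution inequality is controlled by the $\bar u^2$-term carrying the correct power of $R$; this makes $\bar u$ behave, up to the shift, like a nonnegative subsolution of the homogeneous equation, which is what the iteration needs.

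The core is then the energy (Caccioppoli) estimate. For an exponent $\beta \geq 1$ and a cutoff $\eta \in C_c^\infty(B_R(x_0))$, I would test the weak inequality with $\chi = \eta^2 \bar u^{2\beta-1}$. Since this is not a priori an admissible $H^1_0 \cap L^\infty$ test function, the honest route is to first replace $u$ by its truncation $u_M := \min(u,M)$, carry out the whole argument with constants uniform in $M$, and recover the full statement by monotone convergence as $M \to \infty$. Using uniform ellipticity on the left-hand side, and Young's inequality to handle the cross terms generated by $\nabla \eta$, one arrives at
\[
\int \eta^2 \, |\nabla(\bar u^\beta)|^2 \, \dx \leq C\beta^2 \int |\nabla \eta|^2 \, \bar u^{2\beta} \, \dx .
\]
Combining this with the Sobolev inequality applied to $\eta \bar u^\beta$ yields the reverse-H\"older step: setting $\chi := d/(d-2) > 1$ (for $d=2$, which is the case actually used here, one replaces the formal $2^* = \infty$ by any large finite Sobolev exponent, so $\chi$ may still be taken $>1$), one obtains, for $R/2 \leq r' < r \leq R$,
\[
\|\bar u\|_{L^{2\beta\chi}(B_{r'})} \leq \big(C\beta^2 (r-r')^{-2}\big)^{1/(2\beta)} \, \|\bar u\|_{L^{2\beta}(B_r)} .
\]

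I would then iterate this inequality along $\beta_m = \chi^m$ and radii $r_m = R/2 + 2^{-m-1}R \searrow R/2$. The exponents telescope, and the decisive point is that the infinite product of the resulting constants converges, because $\sum_m m\,\chi^{-m} < \infty$. Unwinding $\bar u = u + k$ produces the stated estimate in the case $p=2$. To reach a general exponent $p>0$, for $p \geq 2$ the same iteration started at exponent $p$ applies verbatim, while for $0 < p < 2$ I would interpolate via $\|u\|_{L^2(B_r)} \leq \|u\|_{L^\infty(B_r)}^{1-p/2} \|u\|_{L^p(B_r)}^{p/2}$, insert this into the $p=2$ bound on a nested family of balls, and use Young's inequality together with the standard absorption lemma to swallow the $\sup$-term on the left, yielding the inequality for all $p>0$.

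The main obstacle is not any single inequality but the bookkeeping of the $\beta$-dependent constants through the infinite iteration: one must track the $\beta^2$ and $(r-r')^{-2}$ factors carefully so that, after taking the $1/(2\beta)$ power at each stage, the accumulated product remains finite, and one must justify the limiting passage $M\to\infty$ in the truncation so that the power test function argument is rigorous rather than merely formal.
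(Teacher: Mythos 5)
Your proposal is correct: it is the standard Moser iteration argument for De Giorgi--Nash--Moser local boundedness (shift by $k=R^{2-d/q}\|f\|_{L^q(B_R)}$ to absorb the inhomogeneity, truncated power test functions $\eta^2\bar u^{2\beta-1}$ giving a Caccioppoli estimate, Sobolev embedding yielding the reverse H\"older step with the correct treatment of the $d=2$ exponent, iteration along $\beta_m=\chi^m$ with convergent constant product, and the interpolation/absorption trick to reach $0<p<2$). The paper itself gives no proof of this theorem --- it quotes it from Han--Lin (Theorem 4.14, with proof in Theorem 4.1) and Gilbarg--Trudinger (Theorem 8.17) --- and the proofs in those references are precisely the argument you outline, so your approach coincides with the paper's (cited) one.
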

		
		Let $M>0$ and denote $\Omega_M= \{x \in \R^2 : |x|\geq M\}$. Applying Theorem \ref{theo-DeG-Nas-Mos} to \eqref{sub-equ-Wn} with $\Omega=\Omega_M$, $a_{jk}=\frac{1}{2} \delta_{jk}$, $f=\vareps_n^2 \mu_n W_n+a_nW_n^2$, $p=q=2$, $R=2$, and $B_2(x_0) \subset \Omega_M$, we get
		\begin{align} \label{DeG-Nas-Mos-appl-1}
		\sup_{B_1(x_0)} W_n(x) \leq C \left( \|W_n\|_{L^2(B_2(x_0))} + \|W_n^2\|_{L^2(B_2(x_0))}\right)
		\end{align}
		for some universal constant $C>0$. Since $B_2(x_0) \subset \Omega_M$, we deduce
		\begin{align*}
		\|W_n\|_{L^2(B_2(x_0))} + \|W_n^2\|_{L^2(B_2(x_0))} &\leq \|W_n\|_{L^2(|x|\geq M)} + \|W_n^2\|_{L^2(|x|\geq M)} \\
		&\rightarrow \|Q_0^2\|_{L^2(|x|\geq M)} + \|Q_0^4\|_{L^2(|x|\geq M)}.
		\end{align*}
		Here we have used $\epsilon_{n}^2\mu_n \to -1$ and the fact that $W_n \rightarrow Q_0^2$ in $L^2(\R^2)$ and $W_n^2 \rightarrow Q_0^4$ in $L^2(\R^2)$ because
		\begin{align*}
		\|W_n-Q_0^2\|_{L^2} &\leq \||\varphi_n|-Q_0\|_{L^4} \||\varphi_n|+Q_0\|_{L^4}, \\
		\|W_n^2-Q_0^4\|_{L^2}&\leq \||\varphi_n|-Q_0\|_{L^8} \||\varphi_n|+Q_0\|_{L^8} \||\varphi_n|^2+Q_0^2\|_{L^4},
		\end{align*}
		and $|\varphi_n|\rightarrow Q_0$ strongly in $L^r(\R^2)$ for all $r\in [2,\infty)$. The later follows from the strong convergence $|\varphi_n| \to Q_0$ in $H^1(\R^2)$ and Sobolev embedding. In particular, for $\eps>0$, there exist $n_\eps \in \N$ and $M_\eps$ sufficiently large such that for all $n\geq n_\eps$ and all $M\geq M_\eps$,
		\[
		\|W_n\|_{L^2(B_2(x_0))} +\|W_n^2\|_{L^2(B_2(x_0))}\leq \frac{\eps}{C}
		\]
		which together with \eqref{DeG-Nas-Mos-appl-1} yield
		\begin{align*} 
		\sup_{B_1(x_0)} W_n(x) \leq \eps
		\end{align*}
		for all $B_1(x_0) \subset \Omega_{M_\eps}$. As $B_1(x_0)$ is arbitrarily in $\Omega_{M_\eps}$, we get (by possibly increasing $M_\eps$)
		\begin{align} \label{est-Wn-1}
		W_n(x) \leq \epsilon \text{ for all } |x| \geq M_\eps \text{ and all $n$ sufficiently large.}
		\end{align}
		Applying again Theorem \ref{theo-DeG-Nas-Mos} to \eqref{sub-equ-Wn} with $\Omega=\R^2$, $a_{jk}=\frac{1}{2} \delta_{jk}$, $f=\mu_n\vareps_n^2 W_n + a_n W_n^2$, $p=q=2$, and $R=2M_\eps$, we get
		\[
		\sup_{B_{M_\eps}(0)} W_n(x) \leq C \left( M_\eps^{-1}\|W_n\|_{L^2(B_{2M_\eps}(0))} + M_\eps \|W_n^2\|_{L^2(B_{2M_\eps}(0))}\right)
		\] 
		for some universal constant $C>0$. This implies
		\begin{align} \label{est-Wn-2}
		\sup_{B_{M_\eps}(0)} W_n(x) \leq C(M_\eps) \text{ for all $n$ sufficiently large.} 
		\end{align}
		Collecting \eqref{est-Wn-1} and \eqref{est-Wn-2}, we prove 
		\begin{align} \label{bound-Wn}
		0 \leq \sup_{x\in \R^2} W_n(x) \leq C \text{ for all $n$ sufficiently large},
		\end{align}
		where $C>0$ is a constant independent of $n$.

		\vspace{10px} \noindent {\bf Step 5. Uniform exponential decay of $W_n$.} We now prove the uniform exponential decay of $W_n$. To this end, we test \eqref{sub-equ-Wn} with
		$e^{\alpha|x|} W_n$ for some constant $\alpha$ to be chosen shortly. We have
		\[
		-\frac{1}{2} \int_{\mathbb R^2}\Delta W_n e^{\alpha |x|} W_n \dx - \mu_n\vareps_n^2 \int_{\mathbb R^2}W_n e^{\alpha|x|}W_n \dx - a_n \int_{\mathbb R^2}W_n^2 e^{\alpha |x|} W_n \dx \leq 0.
		\]
		Observe that
		\begin{align*}
		\int_{\mathbb R^2}\Delta W_n e^{\alpha|x|} W_n \dx &= \int_{\mathbb R^2}e^{\alpha |x|} \left( \frac{1}{2}\Delta (W_n^2) - |\nabla W_n|^2 \right) \dx \\
		&= \frac{1}{2} \int_{\mathbb R^2}W_n^2 \Delta(e^{\alpha|x|}) \dx - \int_{\mathbb R^2}|\nabla W_n|^2 e^{\alpha |x|} \dx \\
		&=\frac{1}{2}\int_{\mathbb R^2}W_n^2 \left(\alpha^2+\frac{\alpha}{|x|}\right) e^{\alpha |x|} \dx - \int_{\mathbb R^2}|\nabla W_n|^2 e^{\alpha|x|} \dx
		\end{align*}
		and
		\begin{align*}
		\int_{\mathbb R^2}|\nabla(W_n e^{\alpha|x|/2})|^2 \dx = \frac{\alpha^2}{4} \int_{\mathbb R^2}W_n^2 e^{\alpha|x|} \dx + \int_{\mathbb R^2}|\nabla W_n|^2 e^{\alpha|x|} \dx - \frac{1}{2}\int_{\mathbb R^2}W_n^2 \left(\alpha^2+ \frac{\alpha}{|x|}\right)e^{\alpha|x|} \dx.
		\end{align*}
		In particular, we have
		\[
		\int_{\mathbb R^2}\Delta W_n e^{\alpha|x|} W_n \dx = \frac{\alpha^2}{4} \int_{\mathbb R^2}W_n^2 e^{\alpha|x|} \dx - \int_{\mathbb R^2}| \nabla ( W_n e^{\alpha|x|/2} ) |^2 \dx,
		\]
		hence
		\[
		\frac{1}{2} \int_{\mathbb R^2}| \nabla ( W_n e^{\alpha|x|/2} ) |^2 \dx - \frac{\alpha^2}{8} \int_{\mathbb R^2}W_n^2 e^{\alpha|x|} \dx  -\mu_n\vareps_n^2 \int_{\mathbb R^2}W_n^2 e^{\alpha |x|} \dx - a_n\int_{\mathbb R^2}W_n^3 e^{\alpha |x|} \dx \leq 0
		\]
		so
		\[
		\int_{\mathbb R^2}\left(-\mu_n\vareps_n^2 -\frac{\alpha^2}{8} - a_n W_n\right)W_n^2 e^{\alpha |x|} \dx \leq 0.
		\]
		We pick $\alpha=1$ and choose $M>0$ so large that $W_n(x) \leq \frac{1}{4a_*}$ for all $|x|\geq M$ and all $n$ sufficiently large (see \eqref{est-Wn-1}). As $\mu_n\vareps_n^2\to -1$ (by Step 1), we get
		\[
		-\mu_n\vareps_n^2 -\frac{1}{8}-a_n W_n(x) \geq \frac{1}{2}
		\]
		for all $|x|\geq M$ and all $n$ sufficiently large. Thus we obtain
		\begin{align*}
		\frac{1}{2} \int_{\R^2 \backslash B_M(0)} W_n^2 e^{|x|} \dx \leq \int_{B_M(0)} \left|-\mu_n\vareps_n^2 -\frac{1}{8}-a_n W_n\right| W_n^2 e^{|x|} \dx \leq C e^M \|W_n\|^2_{L^2} \leq C e^M
		\end{align*}
		for all $n$ sufficiently large, where we have used \eqref{bound-Wn} to get the second estimate. This proves that
		\begin{align} \label{expo-deca-Wn}
		\int_{\R^2} W_n^2 e^{|x|} \dx \leq C,
		\end{align}
		for all $n$ sufficiently large, where $C>0$ is independent of $n$. From this, we get
		\begin{align} \label{expo-deca-varphi-n}
		\int_{\mathbb R^2}|\varphi_n|^2 e^{|x|/4} \dx = \int_{\mathbb R^2}W_n e^{|x|/2} e^{-|x|/4} \dx\leq \left(\int_{\mathbb R^2}W_n^2 e^{|x|} \dx\right)^{1/2} \left(\int_{\mathbb R^2}e^{-|x|/2} \dx\right)^{1/2} \leq C
		\end{align}
		for all $n$ sufficiently large. An immediate consequence of this uniform exponential decay is that $|x||\varphi_n|\rightarrow |x|Q_0$ strongly in $L^2(\R^2)$.

		\vspace{10px} \noindent {\bf Step 6. $H^1$-strong convergence.} By the definition of $\varphi_n$ (see \eqref{varphi-n}), we have
		\[
		\phi_n(x) = \vareps_n^{-1} \varphi_n(\vareps_n^{-1}(x-x_n)) e^{-i x_n^\perp \cdot x-i\theta_n}.
		\]
		Since $\phi_n$ is a ground state for $E^{\rm NLS}_{1,a_n}$, we see that
		\begin{align}\label{E-an-phi-n}
		E^{\rm NLS}_{1,a_n}=\Ec^{\rm NLS}_{1,a_n}(\phi_n) =\|\nabla \phi_n\|^2_{L^2} + \|x\phi_n\|^2_{L^2} + 2 \<L\phi_n,\phi_n\> - \frac{a_n}{2} \|\phi_n\|^4_{L^4}.
		\end{align}
		This implies the following identity
		\begin{align}\label{E-an-varphi-n}
		\vareps_n^2E^{\rm NLS}_{1,a_n} = \|\nabla \varphi_n\|^2_{L^2} +2 \vareps_n^2 \<L\varphi_n, \varphi_n\> + \vareps_n^4 \|x\varphi_n\|^2_{L^2} - \frac{a_n}{2} \|\varphi_n\|^4_{L^4}.
		\end{align}
		By the Cauchy--Schwarz inequality, we have 
		\[
		|2\vareps_n^2\<L\varphi_n, \varphi_n\>| \leq 2\vareps_n^2 \|\nabla \varphi_n\|_{L^2} \|x\varphi_n\|_{L^2} \leq \frac{1}{2} \|\nabla \varphi_n\|^2_{L^2} +2\vareps^4_n\|x\varphi_n\|^2_{L^2}
		\]
		which implies
		\[
		\|\nabla \varphi_n\|^2_{L^2} \leq 2 \left(\vareps_n^2 E^{\rm NLS}_{a_n} + \vareps_n^4 \|x\varphi_n\|^2_{L^2} + \frac{a_n}{2}\|\varphi_n\|^4_{L^4}\right).
		\]
		Since $E^{\rm NLS}_{1,a_n}\to 0$, $\vareps_n \to 0$, $|x||\varphi_n| \to |x|Q_0$ strongly in $L^2(\R^2)$, and $|\varphi_n| \to Q_0$ strongly in $L^4(\R^2)$, we infer that $\{\varphi_n\}_n$ is bounded uniformly in $H^1(\R^2)$. 
		
		From \eqref{E-an-varphi-n}, we also have
		\begin{align*} 
		\|\nabla \varphi_n\|^2_{L^2} - \frac{a_*}{2} \|\varphi_n\|^4_{L^4} = \vareps_n^2 E^{\rm NLS}_{1,a_n} - 2\vareps_n^2 \<L\varphi_n,\varphi_n\> - \vareps_n^4 \|x\varphi_n\|^2_{L^2} - \frac{a_*-a_n}{2}\|\varphi_n\|^4_{L^4}.
		\end{align*}
		Using the uniform boundedness of $\{\varphi_n\}_n$ in $H^1(\R^2)$, the strong convergence $|x||\varphi_n| \to |x| Q_0$ in $L^2(\R^2)$, and $a_n \nearrow a_*$, we deduce that
		\[
		\lim_{n\to \infty} \|\nabla \varphi_n\|^2_{L^2} -\frac{a_*}{2} \|\varphi_n\|^4_{L^4} =0.
		\]
		Since $\|\varphi_n\|_{L^2}=1$ and $|\varphi_n| \to Q_0$ strongly in $L^r(\R^2)$ for all $r\in [2,\infty)$, there exists $\{z_n\}_n\subset \R^2$ such that
		\begin{align} \label{zn}
		\varphi_n(x+z_n) \rightarrow e^{i\theta}Q_0(x)
		\end{align}
		strongly in $H^1(\R^2)$, for some $\theta \in \R$. Using the fact that $\|Q_0(\cdot+z_n) - Q_0\|_{H^1}\to 0$ if and only if $|z_n|\to 0$, we get $|z_n|\to 0$. This in turn implies that $\varphi_n \to e^{i\theta}Q_0$ strongly in $H^1(\R^2)$ since
		\begin{align*}
		\|\varphi_n-e^{i\theta}Q_0\|_{H^1} &=\|\varphi_n(\cdot+z_n) - e^{i\theta} Q_0(\cdot+z_n)\|_{H^1} \\
		&\leq \|\varphi_n(\cdot+z_n)- e^{i\theta} Q_0\|_{H^1} + \|Q_0 -Q_0(\cdot+z_n)\|_{H^1} \to 0.
		\end{align*}
		Now we write
		\[
		\varphi_n(x) = q_n(x) + i r_n(x)
		\]
		with $q_n$ and $r_n$ the real and imaginary parts of $\varphi_n$ respectively. By \eqref{ortho-cond-Q0}, we have the following orthogonality condition
		\begin{align} \label{ortho-cond}
		\int_{\R^2} Q_0 r_n \dx =0.
		\end{align}
		Since $\|\varphi_n - e^{i\theta} Q_0\|^2_{L^2} \to 0$, we have
		\[
		\int_{\mathbb R^2}\left(\rea(\varphi-e^{i\theta}Q_0)\right)^2+\left(\ima(\varphi_n-e^{i\theta}Q_0) \right)^2 \dx \to 0.
		\]
		In particular, we get
		\[
		\int_{\mathbb R^2}(r_n-Q_0 \sin \theta)^2 \dx \to 0.
		\]
		Using \eqref{ortho-cond}, we have
		\[
		\int_{\mathbb R^2}r_n^2 + Q_0^2 \sin^2\theta \dx \to 0.
		\]
		This shows that
		\[
		\int_{\mathbb R^2}r_n^2 \dx \to 0 \text{ and } \sin^2\theta = 0.
		\]
		Hence $\theta=0$ and $\varphi_n \to Q_0$ strongly in $H^1(\R^2)$. In particular, we have
		\begin{align} \label{qn-rn}
		\int_{\mathbb R^2}(q_n -Q_0)^2 \dx \to 0 \quad \text{and} \quad \int_{\mathbb R^2}r_n^2 \dx \to 0.
		\end{align}
		This, together with the exponential decay of $W_n$, yields
		\begin{align} \label{vari-qn-rn}
		\int_{\mathbb R^2}|x|^2(q_n-Q_0)^2 \dx \to 0 \quad \text{and} \quad \int_{\mathbb R^2}|x|^2 r_n^2 \dx \to 0.
		\end{align}
		In fact, by the exponential decay of $W_n$ (see \eqref{expo-deca-varphi-n}), we have
		\begin{align*}
		\int_{\mathbb R^2}|x|^2 r_n^2 \dx \leq \left(\int_{\mathbb R^2}|x|^4 r_n^2 \dx\right)^{1/2} \left(\int_{\mathbb R^2}r_n^2 \dx\right)^{1/2} \to 0
		\end{align*}
		and similarly for $q_n-Q_0$.

		\vspace{10px} \noindent {\bf Step 7. Smallness of the imaginary part.} Observe that
		\begin{align} \label{iden-L-varphi-n}
		\begin{aligned}
		\<L\varphi_n, \varphi_n\> = \rea \<L\varphi_n, \varphi_n\> &= \int_{\mathbb R^2}x^\perp \cdot \ima(\overline{\varphi}_n \nabla \varphi_n) \dx \\
		&= \int_{\mathbb R^2}x^\perp \cdot (q_n \nabla r_n - r_n \nabla q_n) \dx = 2 \int_{\mathbb R^2}x^\perp q_n \nabla r_n \dx
		\end{aligned}
		\end{align}
		which implies
		\[
		|\<L\varphi_n, \varphi_n\>| \leq 2\|xq_n\|_{L^2} \|\nabla r_n\|_{L^2} \leq C \|\nabla r_n\|_{L^2}.
		\]
		Here we have used the fact that $|x| q_n$ is bounded uniformly in $L^2(\R^2)$ since $|x||\varphi_n| \rightarrow |x| Q_0$ strongly in $L^2(\R^2)$. We deduce from the above and \eqref{E-an-varphi-n} that
		\begin{align*}
		\vareps_n^2 E^{\rm NLS}_{1,a_n} \geq \int_{\mathbb R^2}|\nabla q_n|^2 +|\nabla r_n|^2 -\frac{a_*}{2} (q_n^4 + r_n^4 + 2q_n^2 r_n^2) \dx - C \vareps_n^2 \|\nabla r_n\|_{L^2}.
		\end{align*}
		We have
		\begin{align*}
		\frac{a_*}{2} \int_{\mathbb R^2}(r_n^4+2q_n^2 r_n^2) \dx \leq a_* \int_{\mathbb R^2}|\varphi_n|^2 r_n^2 \dx & = \int_{\mathbb R^2}Q^2 r_n^2 \dx + a_* \int(|\varphi_n|^2-Q_0^2) r_n^2 \dx \\ & = \int_{\mathbb R^2}Q^2 r_n^2 \dx + o(1)\|r_n\|^2_{H^1}.
		\end{align*}
		Here we have used that
		\begin{align*}
		\left|\int_{\mathbb R^2}(|\varphi_n|^2-Q_0^2)r_n^2 \dx\right| \leq \||\varphi_n|^2-Q_0^2\|_{L^2} \|r_n\|^2_{L^4} \leq C\||\varphi_n|^2-Q_0^2\|_{L^2} \|r_n\|^2_{H^1}
		\end{align*}
		and 
		\[
		\||\varphi_n|^2-Q_0^2\|_{L^2} \leq \||\varphi_n|-Q_0\|_{L^4} \||\varphi_n|+Q_0\|_{L^4} \to 0
		\]
		as $|\varphi_n| \to Q_0$ strongly in $H^1(\R^2)$ hence in $L^4(\R^2)$ by Sobolev embedding. On the other hand, by \eqref{GN-ineq}, we have
		\[
		\int_{\mathbb R^2}|\nabla q_n|^2 - \frac{a_*}{2} q_n^4 \dx \geq \|\nabla q_n\|^2_{L^2}(1-\|q_n\|^2_{L^2}) = (1+o(1)) \|r_n\|^2_{L^2},
		\]
		where we have used that $q_n \to Q_0$ strongly in $H^1(\R^2)$, $\|q_n\|^2_{L^2} + \|r_n\|^2_{L^2}=1$ as $\|\varphi_n\|^2_{L^2}=1$, and $\|\nabla Q_0\|^2_{L^2}=1$. Thus we get
		\begin{align*}
		\vareps_n^2 E^{\rm NLS}_{1,a_n} &\geq \int_{\mathbb R^2}|\nabla r_n|^2 - Q^2 r_n^2 +r_n^2 \dx + o(1) \|r_n\|^2_{H^1} - C\vareps_n^2 \|\nabla r_n\|_{L^2} \\
		&= \<\Lc r_n, r_n\> +o(1) \|r_n\|^2_{H^1} - C\vareps_n^2 \|\nabla r_n\|_{L^2},
		\end{align*}
		where $\Lc := -\Delta - Q^2 +1$. 
		
		We now use the non-degeneracy property of $Q$. It is well-known (see \cite[Theorem 11.8 and Corrollary 11.9]{LieLos-01}) that $Q$ is the first eigenfunction of $\Lc$ and the corresponding eigenvalue $0$ is non-degenerate. In particular, we have
		\[
		\<\Lc u, u\> \geq \lambda_2 \|u\|^2_{L^2}
		\]
		for all $u$ orthogonal to $Q$, where $\lambda_2>0$ is the second eigenvalue of $\Lc$. This together with the fact that
		\[
		\<\Lc u, u\> \geq \|u\|^2_{H^1} -\|Q\|^2_{L^\infty} \|u\|^2_{L^2}
		\]
		yield
		\[
		\<\Lc u, u\> \geq C \|u\|^2_{H^1}
		\]
		for some constant $C>0$ and all $u$ orthogonal to $Q$. Thanks to this estimate and the orthogonality condition \eqref{ortho-cond}, we get
		\[
		\vareps_n^2 E^{\rm NLS}_{1,a_n} \geq C_1 \|r_n\|^2_{H^1} - C_2\vareps_n^2 \|\nabla r_n\|_{L^2}
		\]
		for some positive constants $C_1$ and $C_2$. This implies that
		\begin{align} \label{est-rn-1} 
		\|r_n\|^2_{H^1} \leq C(\vareps_n^2 E^{\rm NLS}_{1,a_n} + \vareps_n^4).
		\end{align}
		On the other hand, from \eqref{uppe-boun-Ea}, \eqref{mag-GN-ineq} and \eqref{dia-ineq}, we have
		\begin{align*}
		C (a_*-a_n)^{1/2} \geq E^{\rm NLS}_{1,a_n} = \Ec^{\rm NLS}_{1,a_n} (\phi_n) \geq \frac{a_*-a_n}{a_*} \|\nablax \phi_n\|^2_{L^2} \geq \frac{a_*-a_n}{a_*} \|\nabla |\phi_n|\|^2_{L^2} = \frac{a_*-a_n}{a_*} \vareps_n^{-2}
		\end{align*}
		which implies
		\begin{align}\label{est-vareps-n}
		E^{\rm NLS}_{1,a_n}\leq C(a_*-a_n)^{1/2} \leq C \vareps_n^2
		\end{align}
		for some constant $C>0$. This together with \eqref{est-rn-1} yield
		\begin{align} \label{est-rn}
		\|r_n\|_{H^1} \leq C \vareps_n^2.
		\end{align}

		\vspace{10px} \noindent {\bf Step 8. Identifying the blow-up limit.} Coming back to \eqref{iden-L-varphi-n}, we have
		\begin{align*}
		\<L\varphi_n, \varphi_n\> =2 \int_{\mathbb R^2}x^\perp \cdot \nabla r_n q_n \dx &= 2\int_{\mathbb R^2}x^\perp \cdot \nabla r_n Q_0 \dx + 2 \int_{\mathbb R^2}x^\perp \cdot \nabla r_n (q_n-Q_0) \dx \\
		&= 2 \int_{\mathbb R^2}x^\perp \cdot \nabla r_n (q_n-Q_0) \dx
		\end{align*}
		where we have used the fact that $x^\perp \cdot \nabla Q_0=0$ since $Q_0$ is radial and \eqref{vari-qn-rn}. This shows that
		\begin{align} \label{est-L-varphi-n}
		|\<L\varphi_n, \varphi_n\>| \leq \|\nabla r_n\|_{L^2} \|x(q_n-Q_0)\|_{L^2} \leq o(1)\|\nabla r_n\|_{L^2} \leq o(1) \vareps_n^2.
		\end{align}
		Here we have used \eqref{est-rn} in the last inequality.
		
		From \eqref{E-an-phi-n} and \eqref{GN-ineq}, we have
		\begin{align*}
		E^{\rm NLS}_{1,a_n} \geq 2\<L\phi_n,\phi_n\> + \|x\phi_n\|^2_{L^2} = 2\<L\varphi_n,\varphi_n\> +\vareps_n^2 \|x\varphi_n\|^2_{L^2}.
		\end{align*}
		Denote
		\[
		\beta_n := \frac{\vareps_n}{(a_*-a_n)^{1/4}}.
		\]
		From \eqref{est-vareps-n}, we have
		\[
		\beta_n^2 \geq C>0.
		\]
		Moreover, using \eqref{uppe-boun-Ea}, we also have
		\[
		C\geq \frac{E^{\rm NLS}_{1,a_n}}{(a_*-a_n)^{1/2}} \geq \frac{2}{(a_*-a_n)^{1/2}}\<L\varphi_n,\varphi_n\> +\beta_n^2 \|x\varphi_n\|^2_{L^2}.
		\]
		Thanks to \eqref{est-L-varphi-n} and the fact that $|x||\varphi_n| \rightarrow |x| Q_0$ strongly in $L^2(\R^2)$, we deduce
		\begin{align*}
		C\geq \beta_n^2(\|xQ_0\|^2_{L^2} + o(1)).
		\end{align*}
		In particular, we deduce that $\{\beta_n\}_n$ is bounded away from zero. Passing to subsequence, we have $\beta_n \rightarrow \beta>0$ as $n\to \infty$. 
		
		By \eqref{E-an-varphi-n}, we have
		\begin{align*}
		E^{\rm NLS}_{1,a_n} & \geq \frac{a_*-a_n}{2}\|\phi_n\|^4_{L^4} + 2\<L\phi_n, \phi_n\> + \|x\varphi_n\|^2_{L^2} \\
		& = \frac{(a_*-a_n)^{1/2}}{2\beta_n^2}\|\varphi_n\|^4_{L^4} + 2\<L\varphi_n, \varphi_n\> + (a_*-a_n)^{1/2}\beta_n^2 \|x\varphi_n\|^2_{L^2}.
		\end{align*}
		Since $\varphi_n \rightarrow Q_0$ strongly in $H^1(\R^2)$, $|x| |\varphi_n| \rightarrow |x| Q_0$ strongly in $L^2(\R^2)$, and \eqref{est-L-varphi-n}, we infer that
		\begin{align*}
		\frac{E^{\rm NLS}_{1,a_n}}{(a_*-a_n)^{1/2}} \geq \frac{1}{2\beta^2} \|Q_0\|^4_{L^4} + \beta^2 \|xQ_0\|^2_{L^2} + o(1).
		\end{align*}
		Optimizing over $\beta>0$ and noticing that $\|Q_0\|_{L^4}^4 = \frac{2}{a_*}$ we get
		\begin{align}\label{liminf}
		\liminf_{n\rightarrow \infty} \frac{E^{\rm NLS}_{1,a_n}}{(a_*-a_n)^{1/2}} \geq 2\frac{\|xQ_0\|_{L^2}}{a_*^{1/2}} \quad \text{and} \quad \beta = \frac{1}{a_*^{1/4}\|xQ_0\|_{L^2}^{1/2}}.
		\end{align}
		From \eqref{limsup} and \eqref{liminf}, we obtain \eqref{nls-energy} and \eqref{nls-gs-critical}.

		\vspace{10px} \noindent {\bf Step 9. $L^\infty$ convergence.} We finally upgrade the $H^1$-convergence of $\{\varphi_n\}_n$ to $L^\infty$-convergence. To this end, we first show the uniform exponential decay for $\nabla \varphi_n$, namely
		\begin{align} \label{expo-deca-grad}
		\int_{\mathbb R^2}|\nabla \varphi_n|^2 e^{|x|/4} \dx \leq C
		\end{align}
		for all $n$ sufficiently large.	We multiply both sides of \eqref{equ-varphi-n} with $e^{\alpha|x|} \overline{\varphi}_n$, integrate over $\R^2$, and take the real part to get
		\[
		\rea \int_{\mathbb R^2}-\Delta \varphi_n e^{\alpha|x|} \overline{\varphi}_n + \vareps_n^4 |x|^2 e^{\alpha |x|} |\varphi_n|^2 + 2\vareps_n^2 L\varphi_n e^{\alpha|x|} \overline{\varphi}_n  - a_n |\varphi_n|^4 e^{\alpha |x|} - \vareps_n^2 \mu_n |\varphi_n|^2 e^{\alpha|x|} \dx =0.
		\]
		Arguing as in \cite[Lemma 3.2]{LewNamRou-17-proc}, we have
		\[
		\rea \int_{\mathbb R^2}-\Delta \varphi_n e^{\alpha|x|} \overline{\varphi}_n \dx = \int_{\mathbb R^2}|\nabla (e^{\alpha|x|/2} \varphi_n)|^2 \dx - \frac{\alpha^2}{2} \int_{\mathbb R^2}e^{\alpha|x|} |\varphi_n|^2 \dx.
		\]
		In particular, we get
		\begin{align*}
		0= \int_{\mathbb R^2}|\nabla(e^{\alpha|x|/2}\varphi_n)|^2 \dx + \vareps_n^4 \int_{\mathbb R^2}|x|^2 e^{\alpha |x|} |\varphi_n|^2 \dx &+ \int_{\mathbb R^2}e^{\alpha|x|} \Big(-a_n |\varphi_n|^2 - \vareps_n^2 \mu_n -\frac{\alpha^2}{4}\Big) |\varphi_n|^2 \dx \\
		&+2\vareps_n^2 \int_{\mathbb R^2}L\varphi_n e^{\alpha|x|} \overline{\varphi}_n \dx.
		\end{align*}
		Since $L(e^{\alpha|x|/2}) =0$, we have
		\begin{align*}
		\left|2\vareps_n^2 \int_{\mathbb R^2}L\varphi_n e^{\alpha|x|} \overline{\varphi}_n \dx\right| &= \left|2\vareps_n^2 \int_{\mathbb R^2}e^{\alpha|x|/2}  \overline{\varphi}_n L(e^{\alpha|x|/2} \varphi_n) \dx\right| \\
		&\leq 2\vareps_n^2 \|x^\perp e^{\alpha|x|/2} \varphi_n\|_{L^2} \|\nabla(e^{\alpha|x|/2} \varphi_n)\|_{L^2} \\
		&\leq \frac{1}{2} \int_{\mathbb R^2}|\nabla(e^{\alpha|x|/2}\varphi_n)|^2 \dx + 2\vareps_n^4 \int_{\mathbb R^2}|x|^2 e^{\alpha|x|} |\varphi_n|^2 \dx.
		\end{align*}
		It follows that
		$$
		\frac{1}{2}\int_{\mathbb R^2}|\nabla(e^{\alpha|x|/2}\varphi_n)|^2 \dx \leq \vareps_n^4 \int_{\mathbb R^2}|x|^2 e^{\alpha|x|} |\varphi_n|^2 \dx + \int_{\mathbb R^2}e^{\alpha|x|} \Big(a_n |\varphi_n|^2 + |\vareps_n^2 \mu_n| +\frac{\alpha^2}{4}\Big) |\varphi_n|^2 \dx
		$$
		By choosing $\alpha=\frac{1}{4}$, using \eqref{expo-deca-Wn}, \eqref{expo-deca-varphi-n} and the fact that $\vareps_n^2 \mu_n \to-1$, we obtain
		\begin{align}\label{expo-deca-nabla-varphi-n}
		\int_{\mathbb R^2}|\nabla(e^{|x|/8}\varphi_n)|^2 \dx \leq C
		\end{align}
		for all $n$ sufficiently large. Note that, by the triangle inequality,
		$$
		\|\nabla(e^{|x|/8}\varphi_n)\|_{L^2} = \left\|e^{|x|/8}\nabla\varphi_n + \frac{x}{8|x|}e^{|x|/8}\varphi_n\right\|_{L^2} \geq \|e^{|x|/8}\nabla\varphi_n\|_{L^2} - \frac{1}{8}\|e^{|x|/8}\varphi_n\|_{L^2}.
		$$
		Then the claim \eqref{expo-deca-grad} follows directly from \eqref{expo-deca-nabla-varphi-n} and \eqref{expo-deca-varphi-n}.
		
		We next show that $\{\varphi_n\}_n$ is bounded uniformly in $H^2(\R^2)$. To see this, we rewrite \eqref{equ-varphi-n} as 
		\[
		-\Delta \varphi_n + \varphi_n = (1+\vareps_n^2 \mu_n) \varphi_n - \vareps_n^4 |x|^2 \varphi_n -2 \vareps_n^2 L\varphi_n + a_n |\varphi_n|^2\varphi_n.
		\]
		Since $\{\varphi_n\}_n$ is bounded uniformly in $H^1(\R^2)$, the uniform exponential decay in \eqref{expo-deca-varphi-n} and \eqref{expo-deca-grad} imply that the right hand side is bounded uniformly in $L^2(\R^2)$. This shows that $\{\varphi_n\}_n$ is bounded uniformly in $H^2(\R^2)$. By the Sobolev embedding $H^{2}(\R^2) \subset L^\infty(\R^2)$ and the strong convergence $\varphi_n \to Q_0$ in $H^1(\R^2)$, we have that $\varphi_n$ converges strongly to $Q_0$ in $L^\infty(\R^2)$ and hence \eqref{nls-gs-critical}.
		\end{proof}
		
		\subsection{Collapse with an almost critical speed}
		We now study the blow-up behavior of minimizers for $E_{\Omega,a}$ when both $\Omega \nearrow 1$ and $a \nearrow a_*$ at the same time. To this end, we recall the following energy asymptotic formula when $\Omega=0$ (see \cite{GuoSei-13}):
		\begin{align} \label{ener-0}
		E^{\rm NLS}_{0,a} = \sqrt{a_*-a} \( 2\frac{\|xQ_0\|_{L^2}}{a_*^{1/2}} + o(1)\) \text{ as } a \nearrow a_*.
		\end{align}
		
		\begin{proof}[Proof of Corollary \ref{coro-blow-nls}]
		Let $\Omega_n \nearrow 1$, $a_n \nearrow a_*$ as $n\rightarrow \infty$, and $\phi_n$ be a minimizer for $E_{\Omega_n,a_n}$. We rewrite the energy functional as follows
		\begin{align}
		E^{\rm NLS}_{\Omega_n,a_n} = \Ec^{\rm NLS}_{\Omega_n, a_n}(\phi_n) &= \Omega_n\Ec^{\rm NLS}_{1, a_n}(\phi_n) + (1-\Omega_n)\Ec^{\rm NLS}_{0, a_n}(\phi_n) \nonumber \\
		&\geq \Omega_n E^{\rm NLS}_{1,a_n} + (1-\Omega_n) \Ec^{\rm NLS}_{0,a_n}(\phi_n) \label{deco-ener-1} \\
		&\geq \Omega_n E^{\rm NLS}_{1,a_n} + (1-\Omega_n) E^{\rm NLS}_{0,a_n}, \nonumber
		\end{align}
		where we have used that $\Ec^{\rm NLS}_{1, a_n}(\phi_n) \geq E^{\rm NLS}_{1, a_n}$ and $\Ec^{\rm NLS}_{0, a_n}(\phi_n) \geq E^{\rm NLS}_{0, a_n}$. Since both $E^{\rm NLS}_{1, a_n}$ and $E^{\rm NLS}_{0, a_n}$ have the same asymptotic formula (see \eqref{nls-energy} and \eqref{ener-0}), we obtain
		\[
		E^{\rm NLS}_{\Omega_n, a_n} = (a_*-a_n)^{1/2}\left(2\frac{\|xQ_0\|_{L^2}}{a_*^{1/2}} +o(1)\right).
		\]
		Let $\psi_n$ be a ground state for $E^{\rm NLS}_{1, a_n}$. By Theorem \ref{theo-blow-nls}, there exist sequences $\{x_n\}_n \subset \R^2$ and $(\vartheta_n)_n \subset [0,2\pi)$ such that
		\[
		\varphi_n(x) := \vareps_n \psi_n(\vareps_n x + x_n) e^{i\vareps_n x_n^\perp \cdot x+i\vartheta_n} \to Q_0(x)
		\]
		strongly in $H^1 \cap L^\infty(\R^2)$ as $n \to \infty$. We choose $\tilde{\psi}_{n}(x) := \psi_n(x+x_n) e^{i x_n^\perp \cdot x+i\vartheta_n}$ as a trial state for $E^{\rm NLS}_{\Omega_n,a_n}$ and obtain
		\begin{align}\label{deco-ener-2}
		E^{\rm NLS}_{\Omega_n,a_n} \leq \Ec^{\rm NLS}_{\Omega_n, a_n}(\tilde{\psi}_{n}) &= \Omega_n \Ec^{\rm NLS}_{1, a_n}(\tilde{\psi}_{n}) + (1-\Omega_n)\Ec^{\rm NLS}_{0, a_n}(\tilde{\psi}_{n}) \nonumber\\
		&= \Omega_n E^{\rm NLS}_{1, a_n} + (1-\Omega_n)\Ec^{\rm NLS}_{0, a_n}(\tilde{\psi}_{n}).
		\end{align}
		Here we have used the magnetic translation invariance of the energy functional $\Ec^{\rm NLS}_{1, a_n}$. Putting together \eqref{deco-ener-1} and \eqref{deco-ener-2}, we obtain
		\[
		\Ec^{\rm NLS}_{0, a_n}(\phi_n) \leq \Ec^{\rm NLS}_{0, a_n}(\tilde{\psi}_n).
		\]
		By \eqref{uppe-boun-Ea} and the arguments in the proof of Theorem \ref{theo-blow-nls} (especially \eqref{est-L-varphi-n}), we have
		\[
		\Ec^{\rm NLS}_{0, a_n}(\tilde{\psi}_n) = \Ec^{\rm NLS}_{1, a_n}(\tilde{\psi}_n) - 2\< \tilde{\psi}_n, L\tilde{\psi}_n \> = E^{\rm NLS}_{1, a_n} - 2\< \varphi_n, L\varphi_n \> \leq (a_*-a_n)^{1/2}\left(2\frac{\|xQ_0\|_{L^2}}{a_*^{1/2}} +o(1)\right).
		\]
		This together with \eqref{ener-0} show that $\phi_n$ is an approximate ground state for $E^{\rm NLS}_{0, a_n}$. We then conclude (see e.g., \cite[Step 5 in Section 3]{LewNamRou-17-proc}) that there exists a sequence of phases $\{\theta_n\}_n \subset [0,2\pi)$ such that
		\begin{align}\label{approx_GS}
		\lim_{n \to \infty}\frac{(a_*-a_n)^{1/4}}{a_*^{1/4}\|xQ_0\|_{L^2}^{1/2}} \phi_n \(\frac{(a_*-a_n)^{1/4}}{a_*^{1/4}\|xQ_0\|_{L^2}^{1/2}} x\) e^{i\theta_n} = Q_0(x)
		\end{align}
		strongly in $H^1(\R^2)$. In fact, we obtain the strong convergence in $L^\infty(\mathbb R^2)$, by the same arguments as in the proof of \eqref{nls-gs-critical}. 
		\end{proof}

	\section{Collapse of many-body ground states}
	\label{S3}
	\setcounter{equation}{0}
	
	In this section, we prove the large-$N$ behavior of ground states for \eqref{eq:GS ener many} given in Theorem \ref{thm:many-body}. 
	\begin{proof}[Proof of Theorem \ref{thm:many-body}]
	
		Following arguments from \cite{LewNamRou-17-proc}, we have
		\[
		CN^{-\beta}\|\nabla Q_N\|_{L^2}\|Q_N\|_{L^6}^3 + E^{\rm NLS}_{\Omega,a_N} \geq E^{\rm QM}_{\Omega,a_N}(N) \geq E^{\rm NLS}_{\Omega,a_N} - CN^{2\beta-1}.
		\]
		where $Q_N$ is given in Theorem \ref{thm:many-body}. Note that the above energy estimates as well as the asymptotic formula of $E^{\rm NLS}_{\Omega,a_N}$ are independent of $\Omega$. Therefore, we obtain \eqref{eq:CV_energy} for every $0\leq\Omega\leq 1$.
		
		To prove convergence of ground states as $\Omega = \Omega_N \nearrow 1$ we consider the perturbed Hamiltonian
		\begin{align}\label{HN eta}
		H_{a_N,N,\eta_N} = H_{a_N,N} + \eta_N\sum_{j=1}^N A_j
		\end{align}
		with ground-state energy per particle denoted $E^{\rm QM}_{\Omega_N,a_N,\eta_N}(N)$. Here $\eta_N >0$ is a small parameter to be chosen later and $A$ is a bounded self-adjoint operator on $L^2 (\mathbb R^2)$. The associated NLS energy functional is
		\begin{align*}
		\Ec^{\rm NLS}_{\Omega_N,a_N,\eta_N}(u) = \Ec^{\rm NLS}_{\Omega_N,a_N}(u) + \eta_N \< Au, u \>.
		\end{align*}
		Denote by $E^{\rm NLS}_{\Omega_N,a_N,\eta_N}$ the corresponding ground-state energy and $u_{\eta_N}$ its ground state. Let $\Phi_N$ be a ground state for $H_N = H_{N,0}$ and $\gamma_{\Phi_N} ^{(1)}$ its one-body reduced density matrix. As in \cite[Step~2 in Section~4]{LewNamRou-17-proc} we obtain
		\begin{align}\label{variation}
		\eta_N \Tr\left[ A \gamma_{\Phi_N} ^{(1)} \right] \geq \eta_N \left\langle u_{\eta_N} | A | u_{\eta_N} \right\rangle +  O (N^{2\beta -1}) + O (N^{3\alpha/4 - \beta}).
		\end{align}
		Again the above estimate is independent of $\Omega_N$. Under the assumption that $a_* - a_N = N ^{-\alpha}$ with
		\[
		0 < \alpha < \min \left\{\frac{4\beta}{5}, 2(1-2\beta) \right\}
		\]
		one can chose $\eta_N = N^{-\alpha/2 - \sigma}$ with
		\[
		0 < \sigma < \min\left\{1-2\beta - \frac{\alpha}{2},\beta - \frac{5\alpha}{4}\right\}
		\]
		in such a way that
		\[
		\eta_N = o\big(E^{\rm NLS}_{0,a_N}\big) = o\big( (a_* - a_N)^{1/2}\big) = o\big(N ^{-\alpha/2}\big)
		\]
		and also 
		\[ 
		\eta_N^{-1} N^{2\beta -1} + \eta_N^{-1} N^{3\alpha/4 - \beta} \underset{N\to\infty}{\longrightarrow} 0.
		\]
		Then dividing \eqref{variation} by $\eta_N$ and repeating the argument with $A$ changed to $-A$ yield
		\begin{equation}\label{almost}
		\left\langle u_{\eta_N} |A | u_{\eta_N} \right\rangle + o (1)\leq \Tr\left[ A \gamma_{\Phi_N} ^{(1)}\right] \leq  \left\langle u_{-\eta_N} |A| u_{-\eta_N }\right\rangle + o (1). 
		\end{equation}
		On the other hand, with the above choice of $\eta_N$, we have
		\[
		\Ec^{\rm NLS}_{\Omega_N,a_N}(u_{\eta_N}) = \Ec^{\rm NLS}_{\Omega_N,a_N,\eta_N}(u_{\eta_N}) + O(\eta_N \|A\|) \leq \Ec^{\rm NLS}_{\Omega_N,a_N}(u_0) + O(\eta_N \|A\|) = E^{\rm NLS}_{\Omega_N,a_N} + O(\eta_N \|A\|).
		\]
		By the argument in the proof of \eqref{nls-gs}, the above implies that
		\[
		\Ec^{\rm NLS}_{0,a_N}(u_{\eta_N}) \leq (a_*-a_n)^{1/2}\left(2\frac{\|xQ_0\|_{L^2}}{a_*^{1/2}} +o_N(1)\right) + O\left(\frac{\eta_N}{1-\Omega_N}\|A\|\right).
		\]
		It then follows that $(u_{\eta_N})$ and $(u_{-\eta_N})$ are sequences of quasi-ground states for $E^{\rm NLS}_{0,a_N}$, under the assumption on $\Omega_N$ in Theorem \ref{thm:many-body}. Thus both sequences satisfy \eqref{approx_GS}. Combining with \eqref{almost}, we get, after a dilation of space variables, trace-class weak-$\star$ convergence of $\gamma_{\Phi_N} ^{(1)}$ to $|Q_N\rangle \langle Q_N|$. Since no mass is lost in the limit, this convergence must hold in trace-class norm. The limit being rank $1$, this implies the convergence of higher order density matrices to tensor powers of the limiting operator by well-known arguments (recalled e.g. in~\cite[Section~2.2]{Rougerie-EMS}).
	\end{proof}

\bibliographystyle{acm}
\bibliography{biblio}
\end{document}